\tikzstyle{block}=[draw opacity=0.7,line width=1.4cm]
\newtheorem{thm}{Theorem}[section]
\newtheorem{lem}[thm]{Lemma}
\newtheorem{cor}[thm]{Corollary}
\newtheorem*{cor*}{Corollary}
\newtheorem{prop}[thm]{Proposition}
\newtheorem*{conjecture*}{Conjecture}
\theoremstyle{remark} 
\newtheorem*{question*}{Question}
\newtheorem{remark*}[thm]{Remark}
\newtheorem{example}[thm]{Example}
\theoremstyle{definition} 
\newtheorem{define}[thm]{Definition}
\newtheorem*{define*}{Definition}
\numberwithin{equation}{section}  
\newcommand{\ZZ}{\mathbb{Z}}     
\newcommand{\PP}{\mathbb{P}}      
\newcommand{\QQ}{\mathbb{Q}}      
\newcommand{\Hom}{\operatorname{Hom}}
\newcommand{\Gal}{\operatorname{Gal}}  
\newcommand{\an}[1]{\operatorname{an}}  
\newcommand{\Aut}{\operatorname{Aut}}   
\newcommand{\PGL}{\operatorname{PGL}}
\newcommand{\tth}{^{\operatorname{th}}}
\newcommand{\Manoa}{M\=anoa}
\newcommand{\Hawaii}{Hawai\kern.05em`\kern.05em\relax i}
\newcommand{\id}{\mathrm{id}}
\DeclareMathOperator{\PrePer}{PrePer}
\DeclareMathOperator{\Twist}{Twist}
\title{Uniform bounds for pre-periodic points in families of twists}
\author{Alon Levy}
\address{ICERM, 
Providence, RI}
\email{levy@math.brown.edu}
\author{Michelle Manes}
\author{Bianca Thompson}
\thanks{The second and third authors' work partially supported by NSF-DMS 1102858.}
\address{Department of Mathematics,
University of \Hawaii\ at \Manoa, 
Honolulu, HI}
\email{mmanes@math.hawaii.edu, bat7@hawaii.edu}
 \date{\today}
\begin{document}

\maketitle

	\begin{abstract}
Let $\phi$ be a morphism of $\PP^N$ defined over a number field $K.$ We prove that there is a bound $B$ depending only on $\phi$ such that every twist of $\phi$ has no more than $B$ $K$-rational preperiodic points.  (This result is analagous to a result of Silverman for abelian varieties~\cite{height}.)  For two specific families of quadratic rational maps over $\QQ$, we find the bound $B$ explicitly.    
\end{abstract}

\section{Introduction}\label{sec:intro}
Let $K$ be a number field and $\phi: \PP_K^N \rightarrow  \PP_K^N$ be a morphism.  We denote by $\phi^n$ the $n^{\mathrm{th}}$ iterate of $\phi$ under composition.  A point $P \in \PP^N$ is periodic if there exists an integer $n>0$ such that $\phi^n(P)=P$, and $P$ is preperiodic if there exist integers $n > m \geq 0$ such that $\phi^n(P) = \phi^m(P)$.  Let 
\[
\PrePer\left(\phi, \PP^N_K\right) = \{ P \in \PP_K^N \colon P \text{ is preperiodic under } \phi\}.
\]

A motivating problem in the field of arithmetic dynamics is the uniform boundedness conjecture of  Morton and Silverman~\cite{ratper}.  

\begin{conjecture*}\label{MScon}
Let $K/\QQ$ be a number field of degree $D$, and let $\phi:\PP^N \rightarrow \PP^N$ be a morphism of degree $d\geq 2$ defined over $K$.  There is a constant $\kappa(D, N, d)$ such that
$$\#\PrePer(\phi,\PP^N_K) \leq \kappa(D, N, d).$$
\end{conjecture*}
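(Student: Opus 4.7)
The plan is to follow the standard decomposition of a preperiodic orbit as a finite tail attached to a periodic cycle, so that it suffices to bound separately (a) the number of $K$-rational periodic points of $\phi$ and (b) the maximal length of the finite tails leading into cycles. Bounding (a) times the maximal tail length in (b) bounds $\#\PrePer(\phi,\PP^N_K)$.

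For (a), I would attempt the classical reduction-modulo-primes argument of Morton and Silverman. If $\phi$ has good reduction at a prime $\mathfrak{p}$ of $K$ with residue field $k$ of characteristic $p$, then reduction is injective on the set of periodic points whose period is prime to $p$, and the image lies in the finite set $\PP^N(k)$ of size $|k|^N+|k|^{N-1}+\cdots+1$. Choosing two primes of good reduction with coprime residue characteristics and combining the resulting divisibility constraints on the possible periods produces an explicit bound on admissible period lengths, and hence on the number of $K$-rational periodic points.

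For (b), I would invoke the canonical height $\hat h_\phi$ of Call and Silverman, which vanishes exactly on $\PrePer(\phi,\PP^N_{\bar K})$. Northcott's theorem applied to $\hat h_\phi$ yields finiteness, and the discrepancy $h-\hat h_\phi$ can be decomposed into local contributions at primes of bad reduction; these local contributions control how far a preperiodic point can sit above the cycle it eventually lands on, hence a tail-length bound.

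The hard part, and the reason the statement is still a conjecture, is \emph{uniformity in $\phi$}. Both halves of the plan above produce bounds that depend genuinely on $\phi$: the reduction argument needs primes of good reduction for the specific $\phi$, and the canonical-height argument requires effective control of $h-\hat h_\phi$, which depends on the coefficients of $\phi$. Converting these $\phi$-dependent bounds into bounds depending only on $(D,N,d)$ appears to require new ideas, for example an effective version of Northcott's theorem along the moduli space $\operatorname{Rat}_d^N/\!\!/\PGL_{N+1}$ together with uniform good-reduction statements valid across that moduli space. No such tools are presently available, and partial results (Benedetto for polynomials in dimension one, Morton--Silverman for small fixed period, Poonen for quadratic polynomials modulo standard conjectures, and the restricted twist version proved in the present paper) cover only special cases; in full generality the conjecture remains open, and the present paper makes no attempt to prove it outright.
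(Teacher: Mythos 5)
You have correctly identified that this statement is the Morton--Silverman uniform boundedness conjecture, which the paper states only as motivation and does not prove; no proof exists in the literature, and the paper's actual contribution (Theorem~\ref{cor:twist bound}) is the much weaker statement that the bound is uniform across the twists of a single fixed map $\phi$. Your diagnosis of the obstruction to uniformity --- that both the good-reduction argument for periodic points and the canonical-height control of tails depend on data specific to $\phi$ rather than only on $(D,N,d)$ --- is accurate, and your conclusion that the conjecture remains open is the correct one.
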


This is a deep and difficult problem.  It implies, for example, uniform boundedness for torsion points on abelian varieties over number fields (see~\cite{ub4av}).  Even the special case $N=1$ and $d=4$ is enough to imply Merel's uniform boundedness of torsion points on elliptic curves proved in~\cite{ecuniform}.   Though much work has been done on this problem for nearly~$20$ years, to date only non-uniform bounds are known.
As a first step, one might ask if Conjecture~\ref{MScon} holds in interesting families of dynamical systems.  For example, Poonen conjectures a precise bound for quadratic polynomials over~$\QQ$ in~\cite{poonenrefined}.

In the present work, we show that Conjecture~\ref{MScon} holds for families of twists of rational maps.  More precisely, for any twist $\psi$ of a rational map $\phi$ defined over a number field $K$, the number of $K$-rational preperiodic points is uniformly bounded by a constant depending only on the map $\phi$, but independent of the twist.  (See Section~\ref{sec:prelim} for relevant definitions.)  This involves finding a bound on the degree of the field of definition of a twist based on the size of the automorphism group of the map~$\phi$.

The statement of our main result in Theorem~\ref{cor:twist bound} and its proof are similar to a result for abelian varieties in~\cite{height}.  There, Silverman shows that given an abelian variety $A$, for all but finitely many twists $A_{\xi}$ of $A$ the set of $K$-rational torsion points $A_{\xi}(K)_{\text{tors}}$ is contained in 
\[
\{ P \in A_{\xi}(\bar K) \colon \text{for some } f \in \Aut{A_\xi}, f \neq \id, f(P) = P\}.
\]

\subsection*{Acknowledgments}  We would like to  thank the Institute for Computational and Experimental Research in Mathematics for a productive and enjoyable semester during which much of this project was completed.  We benefited greatly from productive conversations with  workshop participants, including Holly Krieger, William Gignac, Frank Paladino, Chi-hao Wang, Andrew Bridy, Kevin Doerksen, Jacqueline Anderson, Jan-Li Lin, Katherine Stange and especially Mike Zieve, who suggested the full strength of Theorem~\ref{cor:twist bound}. Thanks also to Joseph Silverman, Xander Faber, and Bianca Viray for comments on an earlier draft.

\section{Uniform Bounds for Families of Twists}\label{sec:prelim}

We begin with some background in arithmetic dynamics.  Throughout, $K$ will be a number field, and we will state explicitly when results hold for more general fields.   

\begin{define}
$\Hom^N_d(K)=\{\phi:\PP^N_K \rightarrow\PP^N_K \colon \phi \text{ is a morphism of degree } d^N\}.$ That is, $\phi$ is defined in each coordinate by homogeneous polynomials of degree $d$ with coefficients in $K$.
\end{define}

The primary tool in our proof is a classic theorem of Northcott~\cite{northcott}.

\begin{thm}[Northcott]\label{Northcott}
Let $\phi\in \Hom_d^N(K)$. Then the set of preperiodic points
$\PrePer(\phi, \bar K) $ is a set of bounded height. In particular,
$\PrePer(\phi, \PP^N_K) $
is a finite set, and more generally, for any $D > 1$ the set
\[
\bigcup_{[L:K]\leq D}
\PrePer(\phi, \PP^N_L)
\]
is finite.
 \end{thm}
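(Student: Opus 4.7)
The plan is to combine the functorial property of the Weil height under morphisms of projective space with the classical Northcott finiteness property for points of bounded height and bounded degree. Write $h\colon \PP^N(\bar K)\to \RR_{\geq 0}$ for the absolute logarithmic Weil height.

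The first step is to show that the preperiodic set has bounded height. The key input is the standard estimate that for any $\phi\in\Hom_d^N(K)$, there exists a constant $C=C(\phi)>0$ such that
\[
\bigl|\,h(\phi(P)) - d\cdot h(P)\,\bigr| \leq C \quad\text{for all } P\in\PP^N(\bar K).
\]
This is proved by comparing local contributions via the Nullstellensatz bound on the defining polynomials of $\phi$. From this, I would either build the canonical height $\hat h_\phi(P)=\lim_{n\to\infty} d^{-n}h(\phi^n(P))$, which satisfies $\hat h_\phi(\phi(P))=d\,\hat h_\phi(P)$ and $\hat h_\phi=h+O(1)$, and then observe that any preperiodic point $P$ with $\phi^n(P)=\phi^m(P)$ forces $(d^n-d^m)\hat h_\phi(P)=0$, hence $\hat h_\phi(P)=0$ and $h(P)\leq C'$; or, more directly, observe that if $h(P) > C/(d-1)$ then $h(\phi(P)) \geq dh(P)-C > h(P)$, so the heights along the forward orbit are strictly increasing and tend to infinity, contradicting the finiteness of a preperiodic orbit. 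Either route yields a constant $B=B(\phi)$ with $h(P)\leq B$ for every $P\in\PrePer(\phi,\bar K)$.

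The second step is to apply the Northcott finiteness property: for any real number $B$ and integer $D\geq 1$, the set
\[
\{P\in\PP^N(\bar K) : h(P)\leq B,\ [K(P):K]\leq D\}
\]
is finite. This is the classical statement, proved by bounding the number of algebraic numbers of bounded height and bounded degree via the coefficients of their minimal polynomials. Combining this with Step 1 gives finiteness of $\bigcup_{[L:K]\leq D}\PrePer(\phi,\PP^N_L)$; taking $D=1$ handles the special case $\PrePer(\phi,\PP^N_K)$.

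The main obstacle, and essentially the only nontrivial content, is establishing the functorial height bound $h(\phi(P)) = dh(P)+O(1)$ with the constant depending only on $\phi$; the rest is immediate from the self-map structure (to produce infinite orbits from positive-height points) and classical Northcott. Since the statement is explicitly attributed to Northcott, I would likely simply cite the height machinery rather than redo these calculations in detail.
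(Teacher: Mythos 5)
Your sketch is the standard argument (functorial height estimate $h(\phi(P)) = d\,h(P) + O(1)$, forcing preperiodic points to have bounded height, then classical Northcott finiteness for bounded height and bounded degree), and it is correct. The paper itself does not prove this theorem at all — it is stated as a classical result with a citation to Northcott's 1950 paper — so there is no in-paper proof to compare against; your outline matches the textbook proof one would find in, e.g., Silverman's \emph{Arithmetic of Dynamical Systems}.
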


Let $f \in \text{PGL}_{N+1}(\bar K)$ act on the points of $\PP^N_{\bar K}$ as a fractional linear transformation in the usual way.   Then we define the morphism 
$$\phi^f = f\circ \phi \circ f^{-1}.$$

\begin{define}
 We say that two morphisms $\phi$ and $\psi$ are \emph{conjugate} if there is some $f\in \PGL_{N+1}\left(\bar K\right)$ such that $\phi^f = \psi$.  They are \emph{conjugate over $K$} if there is some $f\in \PGL_{N+1}(K)$ such that $\phi^f = \psi$.  
 \end{define}
 
 \bigskip

If $P$ is a point of  period $n$ for $\phi$, then $f(P)$ has the same property for $\phi^f$, and similarly for preperiodic points.  It is also easily seen that $\left(\phi^n\right)^f = \left(\phi^f\right)^n$.  So  conjugate maps have essentially the same dynamical behavior.  However, if we are concerned with the arithmetic of the (pre)periodic points, we must be a bit more careful.
For a map $\phi \in \Hom^N_d(K)$, 
$$
\Twist(\phi/K) = \left\{ 
\begin{matrix}
\text{$K$-equivalence classes of maps $\psi \in \Hom^N_d(K)$ } \\
\text{such that $\psi$ is $\overline K$-equivalent to $\phi$}\\
\end{matrix} \right\}.
$$

\begin{example}
Let 
\[
\phi(z) = z - \frac 2 z \text{ and }\psi(z) = z - \frac 1 z.
\]
  Also let $f(z) = z\sqrt{2}$.  One may check that $\phi^f (z) = \psi(z)$.  So $\psi$ is a (quadratic) twist of $\phi$.  Solving
$\phi^2(z) = z$ gives the $\QQ$-rational two-cycle $\{\pm 1\}$.  But $\psi$ does not have rational points of period~$2$; solving $\psi^2(z) = z$ gives   $\{\pm 1/\sqrt 2\}$.
\end{example}

\begin{define}
           For any $\phi \in \Hom_d^N$ define $\mathcal{A}_\phi$ to
be the  \emph{automorphism group} of $\phi$, i.e.,
           \[
          \mathcal{A}_\phi = \{f \in \PGL_{N+1} \mid \phi^f = \phi\}.
          \]
          From~\cite{levy, petsche}, $\mathcal{A}_{\phi}$ is well-defined
as a finite subgroup of $\PGL_{N+1}$.
       \end{define}
 
We introduce some notation to make the statement and proof of Lemma~\ref{lem:bound deg} more succinct.  
Let $\phi\in \Hom_d^N(K)$,
 and  $\psi \in \Twist(\phi/K)$. 
 So there exists an $f\in \PGL_{N+1}$ 
 with $\phi^f=\psi$. 
 Write $f=(a_{ij})$, a matrix. At least one of the $a_{ij}$ is nonzero, say $a_{lm}\neq 0$.  Then $f=(a_{ij}')$ where $a_{ij}'=\frac{a_{ij}}{a_{lm}}$ represents the same element of $\PGL_{N+1}.$ 
 
 \begin{define}
 Let $K_f=K(a_{ij}')$ be the
 minimal field of definition for a given $f\in\PGL_{N+1}$ such that $\phi^f = \psi$, and let $L_f$ be the Galois closure of $K_f$.
 \end{define}

\begin{lem}\label{lem:bound deg}
For any $\psi \in \Twist(\phi / K)$ and any $f$ satisfying $\phi^f = \psi$, 
 \[ [K_f : K ] \leq \#\mathcal A_\phi.\]
\end{lem}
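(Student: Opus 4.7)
The plan is to use Galois descent applied to the conjugating element $f$. Since $\phi$ and $\psi$ both lie in $\Hom_d^N(K)$, they are fixed by the absolute Galois group $G = \Gal(\bar K/K)$, and the identity $\phi^f = \psi$ should produce a Galois-equivariant obstruction measured by $\mathcal{A}_\phi$.

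Concretely, for any $\sigma \in G$, I would first observe that
\[
\phi^{f^\sigma} = (f\circ \phi \circ f^{-1})^\sigma = \psi^\sigma = \psi = \phi^f,
\]
where the first equality uses that $\phi$ itself is Galois-invariant (because its defining polynomials have coefficients in $K$), and the last two use that $\psi \in \Hom_d^N(K)$ as well. Rearranging gives $\phi^{f^{-1}f^\sigma}=\phi$, so the element $c(\sigma) := f^{-1} f^\sigma$ lies in $\mathcal{A}_\phi$. This defines a map of sets
\[
c\colon G \longrightarrow \mathcal{A}_\phi, \qquad \sigma \longmapsto f^{-1} f^\sigma.
\]

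Next I would analyze the fibers of $c$. Observe that $c(\sigma)=c(\tau)$ if and only if $f^\sigma = f^\tau$, which by the definition of $K_f$ (the minimal field of definition of $f$, obtained by normalizing a nonzero entry of its matrix representative to $1$) happens precisely when $\sigma$ and $\tau$ lie in the same left coset of $\Gal(\bar K/K_f)$ in $G$. Thus $c$ factors through an injection
\[
\bar c\colon G / \Gal(\bar K/K_f) \hookrightarrow \mathcal{A}_\phi,
\]
on the set of cosets. Since the index $[G : \Gal(\bar K/K_f)]$ equals $[K_f : K]$ (even when $K_f/K$ is not Galois), this injection yields
\[
[K_f : K] \leq \#\mathcal{A}_\phi,
\]
completing the argument.

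The only delicate point is the first step — carefully verifying that the Galois action on matrix coordinates of $\PGL_{N+1}$ is compatible with conjugation of morphisms, so that $\phi^{f^\sigma} = (\phi^f)^\sigma$ when $\phi$ is defined over $K$. The normalization chosen in the definition of $K_f$ (dividing through by $a_{lm} \neq 0$) is exactly what makes $f^\sigma$ well-defined in $\PGL_{N+1}$, and with this convention the computation above is immediate. Note that $c$ is a cocycle rather than a homomorphism, but we do not need the group structure on $G$, only that $c$ separates cosets of $\Gal(\bar K/K_f)$ inside $\mathcal{A}_\phi$.
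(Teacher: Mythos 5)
Your proof is correct and follows essentially the same strategy as the paper's: both hinge on the observation that $\sigma \mapsto f^{-1}f^\sigma$ defines a map into $\mathcal{A}_\phi$ whose fibers correspond to cosets of the stabilizer of $f$. The only (cosmetic) difference is that the paper works inside the finite group $\Gal(L_f/K)$, where $L_f$ is the Galois closure of $K_f$, and bounds fiber sizes, whereas you work directly in $\Gal(\bar K/K)$ and pass to the coset space $G/\Gal(\bar K/K_f)$; the two routes amount to the same index computation.
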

\begin{proof}
Choose $f \in \PGL_{N+1}$ such that $\phi^f = \psi$, and let $\sigma\in \Gal(L_f / K)$. Since $\psi$ is defined over $K$, 
\begin{align*}
f\phi f^{-1}
&=(f\phi f^{-1})^{\sigma} 
=f^{\sigma} \phi ^{\sigma}(f^{-1})^{\sigma}\\
&=f^{\sigma}\phi(f^{-1})^{\sigma}.\\
\phi &= f^{-1}f^{\sigma}\phi(f^{\sigma})^{-1}f.
\end{align*}
Hence $f^{-1}f^{\sigma}\in \mathcal A_\phi$.   Define the map 
\begin{align*}
\rho:\Gal(L_f /  K ) & \rightarrow \mathcal A_\phi\\
\sigma &\mapsto f^{-1}f^{\sigma}
\end{align*}
Suppose
\begin{align*}
\rho(\sigma)=\rho(\tau) &\Rightarrow f^{-1}f^{\sigma}=f^{-1}f^\tau\\
&\Rightarrow f^{\sigma}=f^{\tau} \\
&\Rightarrow f=f^{\tau\sigma^{-1}}.
\end{align*}
So $\tau\sigma^{-1} \in \Gal(L_f / K_f)$ since it fixes~$f$.   Clearly if $\tau_1\sigma^{-1} = \tau_2\sigma^{-1}$ as elements of
$\Gal(L_f / K_f)$, then they are equal as elements of $\Gal(L_f / K)$, and  $\tau_1 = \tau_2$.  Hence  $\rho$ is at most $[L_f : K_f]$-to-$1$, and we conclude that
\[
[L_f : K_f] [K_f: K ] = [L_f : K ] \leq [L_f : K_f] (\#\mathcal A_\phi),
\]
which gives the result since all of the extensions are finite.
\end{proof}

The following is now a straightforward consequence of Northcott's Theorem.
\begin{thm}\label{cor:twist bound}
Let $K$ be a number field and let $\phi\in \Hom_d^N(K)$.  Then there is a uniform bound $B_\phi$ such that for all $\psi\in \Twist(\phi/K)$,
\[
\# \PrePer(\psi, \PP^N_K) \leq B_\phi.
\]
\end{thm}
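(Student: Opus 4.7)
The plan is to combine Lemma~\ref{lem:bound deg} with Northcott's theorem (Theorem~\ref{Northcott}) in a direct way. Given a twist $\psi\in\Twist(\phi/K)$, we choose $f\in\PGL_{N+1}$ with $\phi^f=\psi$, and note that $f$ provides a dictionary between $K$-rational preperiodic points of $\psi$ and $K_f$-rational preperiodic points of $\phi$. Since $[K_f:K]$ is controlled by $\#\mathcal{A}_\phi$ (independent of the twist), Northcott packages everything into a single finite set.

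First I would verify that $f$ transfers preperiodic points correctly. The identity $(\phi^f)^n=(\phi^n)^f$ implies that $P\in\PrePer(\psi,\PP^N)$ if and only if $f^{-1}(P)\in\PrePer(\phi,\PP^N)$. Moreover, if $P\in\PP^N(K)\subseteq\PP^N(K_f)$ and $f\in\PGL_{N+1}(K_f)$, then $f^{-1}(P)\in\PP^N(K_f)$. Thus the rule $P\mapsto f^{-1}(P)$ defines an injection
\[
\PrePer(\psi,\PP^N_K)\ \hookrightarrow\ \PrePer(\phi,\PP^N_{K_f}).
\]

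Next I would apply Lemma~\ref{lem:bound deg} to conclude $[K_f:K]\leq\#\mathcal{A}_\phi$, so $K_f$ lies in the (possibly infinite) family of extensions of $K$ of degree at most $D:=\#\mathcal{A}_\phi$. By Northcott's theorem, the set
\[
S_\phi\ :=\ \bigcup_{[L:K]\leq D}\PrePer(\phi,\PP^N_L)
\]
is finite. Setting $B_\phi:=\#S_\phi$, we obtain
\[
\#\PrePer(\psi,\PP^N_K)\ \leq\ \#\PrePer(\phi,\PP^N_{K_f})\ \leq\ \#S_\phi\ =\ B_\phi,
\]
and this bound depends only on $\phi$, not on the chosen twist $\psi$ or the chosen conjugating element $f$.

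There is no real obstacle here — the content is already packed into Lemma~\ref{lem:bound deg} (bounding the degree of the field of definition of $f$ by $\#\mathcal{A}_\phi$) and into Northcott's theorem (uniform finiteness of preperiodic points over all extensions of bounded degree). The only thing to be careful about is that the bound $B_\phi$ obtained this way is wildly inexplicit, since Northcott gives no effective control on $\#S_\phi$; making $B_\phi$ explicit for specific families, as the abstract promises for two families of quadratic rational maps, is a substantially harder problem addressed later in the paper.
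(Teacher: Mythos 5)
Your proof is correct and takes essentially the same approach as the paper: use Lemma~\ref{lem:bound deg} to bound $[K_f:K]$ by $\#\mathcal{A}_\phi$, transfer preperiodic points via $f^{-1}$, and invoke Northcott's Theorem over all extensions of bounded degree. The only difference is that you spell out the injection $\PrePer(\psi,\PP^N_K)\hookrightarrow\PrePer(\phi,\PP^N_{K_f})$ slightly more explicitly than the paper does.
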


\begin{proof}
Given  $\psi \in \Twist(\phi/K)$ and $f\in \PGL_{N+1}$ such that $\phi^f =  \psi$, every $K$-rational periodic point for $\psi$ corresponds  to a $K_f$-rational periodic point for $\phi$.
From Lemma~\ref{lem:bound deg},   $[K_f : K ] \leq \#\mathcal A_\phi := D$.
By Northcott's Theorem, 
\[
\bigcup_{[K_f:K]\leq D}
\PrePer(\phi, \PP^N_{K_f})
\]
is finite; call the size of this set $B_\phi$.  Then  for every $\psi \in \Twist(\phi/K)$, 
\[
\#\PrePer(\psi, \PP_K^N) \leq B_\phi. \qedhere
\]
\end{proof}

\begin{remark*}
In fact Lemma ~\ref{lem:bound deg} holds over an arbitrary field $K$; hence Theorem ~\ref{cor:twist bound} also holds when $K$ is a  function field over a finite field. 
\end{remark*}

\begin{example}
Suppose that $\phi(z) : \PP^1_K \to \PP^1_K$ is a degree-$2$ morphism with a unique fixed point $P \in \bar K$.  Applying any Galois action to the equation $\phi(P) = P$ shows that any Galois conjugate of $P$ is also fixed by $\phi$.  Hence, $P \in \PP^1_K$.
Choose a change of coordinates $f \in \PGL_2( K)$ moving $P$ to infinity.  Then we may write
\[
\phi^f(z) = \frac{z^2+az+b}{z+a} \text{ for some } a,b \in  K \text{ with } b \neq 0.
\]
The critical points of $\phi^f$ are $a\pm \sqrt b$, so conjugate by $g \in \PGL_2(K)$ which fixes infinity and moves the critical points to $\pm \sqrt b$, and we see that $\phi$ is conjugate over $K$ to a map of the form $\phi_b(z)=z + \frac b z$.

 For any $b \in K^*$, $\phi_b$ is a quadratic twist of $\phi(z) = z+ \frac 1 z$ with conjugating map $f_b=z\sqrt{b}$.  So $f_b \in \PGL_2(L)$ with $[L:K] \leq 2$.  Let $Q \in \PP^1_K$ be a $K$-rational preperiodic point for $\phi_b$.  Then $f_b(Q) $ is an $L$-rational preperiodic point for $\phi$.

By Theorem~\ref{cor:twist bound}, 
$\#\PrePer(\phi_b, \PP^1_K) \leq B$ for some absolute bound $B$, independent of the parameter $b$.  In Section~\ref{sec:ExplicitBound}, we show that for this twisted family of quadratic rational maps when $K=\QQ$, the bound is  actually~$6$. 
\end{example}

\section{Main Tool: Dynatomic Polynomials}\label{sec: dyn poly}
For a morphism   $\phi : \PP_K^1 \to \PP_K^1$  (i.e. when the dimension  $N=1$), we may choose $F,G \in K[z]$ such that $\phi(z) = F(z)/G(z)$ with $\deg\phi = \max\{\deg F, \deg G\} $.  Hence we use the terms ``morphism'' and ``rational map'' interchangeably  when the map is on the projective line.

In this section, we will write rational maps using homogeneous coordinates:
\begin{align*}
\phi: \PP^1 &\to \PP^1\\
[X:Y] &\mapsto [F(X,Y): G(X,Y)],
\end{align*}
where $F$ and $G$ are homogeneous polynomials of the same degree with no common factor.
Then for $n>1$,
\[
\phi^n[X : Y] = [F_n(X,Y) : G_n(X,Y)],
\]
where $F_n$ and $G_n$ are given recursively by
\begin{equation*}
F_n(X,Y) = F_{n-1}(F(X,Y), G(X,Y)) \text{ and } G_n(X,Y) = G_{n-1}(F(X,Y), G(X,Y)) .
\label{eqn:recursion}
\end{equation*}

\begin{define} 
The $n$-period polynomial of $\phi$ is
 \[
 \Phi_{\phi,n}(X,Y)=YF_n(X,Y)-XG_n(X,Y).
 \]
  The \textit{$n^{th}$dynatomic polynomial of $\phi$} is the polynomial 
  \[
  \Phi_{\phi,n}^*(X,Y)=\prod_{d\mid n}{\Phi_{\phi,d}(X,Y)^{\mu(\frac{n}{d})}},
  \]
  where $\mu$ is the Mobius function. When the function $\phi$ is clear, we will suppress it from the notation, writing simply $\Phi_n^*$.
\end{define}
See~\cite[p.149]{ads} for a proof that $ \Phi_n^*(X,Y)$ is indeed a polynomial.  Clearly it is then a homogeneous polynomial in $X$ and $Y$.  Further, we let 
\[
\nu_2(n) = \sum_{d\mid n}\mu\left(\frac n d\right) 2^d = \text{ degree of } \Phi_n^* \text{ for a quadratic rational map.}
\]

By construction, all points of period $d \mid n$ are roots of the $n$-period polynomial~$\Phi_n$.  One might hope that the roots of $\Phi_n^*$ are the points of minimal period $n$ (eliminating as roots points with period $d < n$).  This isn't quite the case, but it is true that every point with minimal period $n$ is a root of $\Phi_n^*$, and that fact is enough for our purposes.  See~\cite[Chapter~4]{ads} for details about dynatomic polynomials and their properties.

\begin{lem}\label{lem:pos power}
The following products are positive powers of $k$ for $n>1:$
\begin{enumerate}[\textup($1$\textup)]
\item
$\displaystyle \prod_{d\mid n}\left( k^{2^d-d-1} \right)^{\mu(\frac n d)}$.
\item $\displaystyle \prod_{d\mid n}\left( k^{2^d-1} \right)^{\mu(\frac n d)}$.
\item $\displaystyle \prod_{d\mid n}\left( k^{2^{d-1}} \right)^{\mu(\frac n d)}$.
\item  $\displaystyle \prod_{d\mid n}\left( k^{b(d)} \right)^{\mu(\frac n d)}$ where $b(d)=\left \lceil \frac{2(2^{d-1}-1)}{3}\right\rceil.$
\end{enumerate}
\end{lem}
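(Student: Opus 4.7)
The plan is to write each product as $k^{E(n)}$, with
\[
E(n) \;=\; \sum_{d \mid n} \mu(n/d)\, e(d),
\]
where $e(d)$ is the relevant exponent function, and then show that $E(n)$ is a positive integer for $n>1$. The elementary inputs I will rely on are: (i) $\sum_{d\mid n}\mu(n/d)=0$ for $n>1$ (so constants in $e$ disappear); (ii) $\sum_{d\mid n}\mu(n/d)\,d=\varphi(n)$; and (iii) the quantity $\nu_2(n)=\sum_{d\mid n}\mu(n/d)\,2^d$ defined above is strictly positive for $n\ge 1$ and divisible by $n$, the latter because the exact-period-$n$ points of a generic quadratic rational map partition into orbits of length exactly $n$.

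With these in hand, (2) and (3) are essentially bookkeeping. For (2), the constants cancel by (i) and the exponent collapses to $\nu_2(n)$, which is positive for $n>1$. For (3), it equals $\tfrac12\nu_2(n)$, an integer because every summand of $\nu_2(n)$ contains a factor of $2^d$ with $d\ge 1$, and positive by (iii). For (1), the exponent reduces via (i) and (ii) to $\nu_2(n)-\varphi(n)$; since $n\mid \nu_2(n)$ and $\nu_2(n)>0$ force $\nu_2(n)\ge n$, while $\varphi(n)\le n-1$ for $n>1$, this difference is a positive integer.

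The bulk of the work is (4). First I clear the ceiling by observing that
\[
b(d) \;=\; \frac{2^d - c(d)}{3}, \qquad c(d) \;=\; \begin{cases} 2 & d \text{ odd,}\\ 1 & d \text{ even,}\end{cases}
\]
so $3E(n) = \nu_2(n) - \sum_{d\mid n}\mu(n/d)\,c(d)$. Writing $c(d)=1+[d\text{ odd}]$ and invoking (i), the correction reduces to $\sum_{d\mid n,\,d\text{ odd}}\mu(n/d)$. I will evaluate this by factoring $n=2^a m$ with $m$ odd and using $\mu(2^a\ell)=0$ whenever $a\ge 2$ and $\ell$ is odd: the sum then works out to $-1$ if $n=2$ and $0$ otherwise. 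This already gives $E(2)=1$. For $n>2$ I still owe $3\mid \nu_2(n)$, which follows from working mod $3$: since $2\equiv -1\pmod 3$, one has $\nu_2(n)\equiv \sum_{d\mid n}\mu(n/d)(-1)^d\pmod 3$, and the very same parity analysis forces this to vanish mod $3$ when $n>2$, giving $E(n)=\nu_2(n)/3$, a positive integer.

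The hard part is (4): removing the ceiling forces a parity split, and proving integrality of the final exponent needs both the odd-divisor Möbius sum and the reduction modulo $3$. Parts (1)--(3) then fall out of routine Möbius-inversion manipulations together with the basic facts $\nu_2(n)>0$ and $n\mid \nu_2(n)$.
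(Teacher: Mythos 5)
Your argument is correct, and for part (4) it is actually more careful than the paper's. The paper's proof of (4) asserts the inequality
\[
\sum_{d\mid n}\mu\!\left(\tfrac n d\right)b(d)\;\geq\;\tfrac{2}{3}\sum_{d\mid n}\mu\!\left(\tfrac n d\right)\bigl(2^{d-1}-1\bigr)
\]
and justifies it as ``taking the result from (3) and multiplying by $\tfrac23$''; but this does not follow term by term, since $\mu(n/d)$ alternates sign while the pointwise inequality $b(d)\geq \tfrac{2}{3}(2^{d-1}-1)$ goes the wrong way at the $\mu=-1$ terms. Your decomposition $b(d)=\frac{2^d - c(d)}{3}$ with $c(d)\in\{1,2\}$ by parity, followed by the exact evaluation of $\sum_{d\mid n,\,d\text{ odd}}\mu(n/d)$ (equal to $-1$ when $n=2$ and $0$ when $n>2$), closes this gap cleanly and in fact shows the two sides of the paper's inequality agree for $n>2$ and differ by $\tfrac13$ only at $n=2$. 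For parts (1)--(3) the two proofs diverge in a smaller way: the paper establishes the explicit lower bound $\nu_2(n)\geq 2^{n-1}$ by truncating the M\"obius sum, then compares with $\varphi(n)\leq n$; you instead invoke $n\mid\nu_2(n)$ (the necklace/dynamical orbit count) together with $\nu_2(n)>0$ to get $\nu_2(n)\geq n>\varphi(n)$. Both are valid; yours is slicker but imports a fact the paper does not use, and you should be aware that $\nu_2(n)>0$ itself is most directly proved by exactly the truncation bound the paper employs, so it cannot be discarded entirely.
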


\begin{proof}
(1) Consider 
\begin{align*}
 \prod_{d\mid n}\left( k^{2^d-d-1} \right)^{\mu(\frac n d)} &= k^{\sum_{d\mid n}{\mu(\frac{n}{d})(2^d-d-1)}}\\
 &=k^{\sum_{d\mid n}{\mu(\frac{n}{d})(2^d-d)}}.
\end{align*}
The last step follows from the fact that when $n>1$ the $\sum_{d\mid n}{\mu\left(\frac{n}{d}\right)}=0.$
Recall that $\sum_{d\mid n}{\mu(\frac n d)d}=\varphi(n)$ where $\varphi(n)$ is the Euler totient function.  Also,
\begin{align}\label{eqn:2^n bound}
\sum_{d\mid n}{\mu\left(\frac n d\right)2^d}&\geq2^n-\sum_{d\mid n\text{ } d\neq n}{2^d}\\
&\geq 2^n-2^{n-1}=2^{n-1}.
\end{align}
Hence
\begin{align*}
\sum_{d\mid n}{\mu\left(\frac{n}{d}\right)(2^d-d)}&=\sum_{d\mid n}{\mu\left(\frac{n}{d}\right)2^d}-\sum_{d\mid n}{\mu\left(\frac{n}{d}\right)d}\\
&=\sum_{d\mid n}{\mu\left(\frac{n}{d}\right)2^d}-\varphi(n)\\
&\geq \sum_{d\mid n}{\mu\left(\frac{n}{d}\right)2^d}-n\\
&\geq 2^{n-1}-n.
\end{align*}
After taking the derivative of $2^{x-1}-x,$ we see that the fuction is increasing as long as $x>1.$ Hence $\sum_{d\mid n}{\mu(\frac{n}{d})(2^d-d)}>0.$

(2) follows immediately from (1).


(3) follows immediately from equation~\eqref{eqn:2^n bound}, replacing $d$ by $d-1$ and using the fact that $n>1$. 

(4)
Now consider 
\begin{align*}
\sum_{d\mid n}{\mu\left(\frac{n}{d}\right)b(d)}\geq \sum_{d\mid n}{\mu\left(\frac{n}{d}\right)\frac{2}{3}(2^{d-1}-1)}.
\end{align*} 
We are taking the result from (3) and multiplying it by $\frac 2 3,$ which is also positive.   
\end{proof}

\begin{lem}\label{lem:content}
Let $K$ be a number field and let 
\begin{align*}
\phi_{k,b}: \PP^1 &\to \PP^1\\
[X:Y] &\mapsto [k(X^2+bY^2): XY]
\end{align*}
 for some $b,k\in K^*$.  Then for every $n >  1$, 
 \begin{enumerate}[\textup($1$\textup)]
 \item
 The coefficient of  $X^{\nu_2(n)}$  in    $\Phi_{\phi_{k,b};n}^*(X,Y)$ is a positive power of $k$ times $C_n(k)$, where $C_n(k)$ refers to the $n\tth$ cyclotomic polynomial in the variable $k$.
 \item
The coefficient of  $Y^{\nu_2(n)}$  in  $\Phi_{\phi_{k,b};n}^*(X,Y)$ is a positive power of $k$ times a positive power of $b$.
\item
Each monomial of $\Phi_n^*$ is divisible by $k$.
 \end{enumerate}
  \end{lem}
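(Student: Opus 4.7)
The plan is to prove each statement by specializing the recursion
\[
F_1 = k(X^2+bY^2),\quad G_1 = XY,\quad F_n = k(F_{n-1}^2 + b G_{n-1}^2),\quad G_n = F_{n-1}G_{n-1},
\]
computing $\Phi_n = YF_n - XG_n$ and then $\Phi_n^* = \prod_{d\mid n}\Phi_d^{\mu(n/d)}$, and invoking the appropriate part of Lemma~\ref{lem:pos power}.

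For (1), the coefficient of $X^{\nu_2(n)}$ is the leading coefficient in $X$ of $\Phi_n^*(X,1)$. A quick induction, using that $F_{n-1}^2$ strictly dominates $bG_{n-1}^2$ in $X$-degree, shows the leading coefficients in $X$ of $F_n(X,1)$ and $G_n(X,1)$ are $k^{2^n-1}$ and $k^{2^n-n-1}$ respectively, so the leading coefficient of $\Phi_n(X,1)$ is $k^{2^n-1} - k^{2^n-n-1} = k^{2^n-n-1}(k^n-1)$. Taking the M\"obius-weighted product and using the classical identity $C_n(k) = \prod_{d\mid n}(k^d-1)^{\mu(n/d)}$, the desired coefficient factors as $\bigl(\prod_{d\mid n}k^{(2^d-d-1)\mu(n/d)}\bigr)\,C_n(k)$, and Lemma~\ref{lem:pos power}(1) identifies the first factor as a positive power of $k$.

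For (2), the coefficient of $Y^{\nu_2(n)}$ equals $\Phi_n^*(0,1)$, so I would evaluate the recursion at $(X,Y)=(0,1)$. Since $G_1(0,1)=0$, the recursion forces $G_n(0,1)\equiv 0$ for all $n$, and $F_n(0,1) = kF_{n-1}(0,1)^2$ yields $F_n(0,1) = k^{2^n-1}b^{2^{n-1}}$ by induction. Hence $\Phi_n(0,1) = k^{2^n-1}b^{2^{n-1}}$, and the M\"obius-weighted product gives $\Phi_n^*(0,1) = \prod_{d\mid n}\bigl(k^{2^d-1}\bigr)^{\mu(n/d)}\cdot \prod_{d\mid n}\bigl(b^{2^{d-1}}\bigr)^{\mu(n/d)}$, which by Lemma~\ref{lem:pos power}(2) and (3) is a positive power of $k$ times a positive power of $b$.

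For (3), the approach is to establish the precise equality $v_k(\Phi_d) = b(d) := \lceil 2(2^{d-1}-1)/3\rceil$ for every $d\geq 1$, where $v_k$ is the $k$-adic valuation obtained by treating $k$ as a formal variable; the additivity $v_k(\Phi_n^*) = \sum_{d\mid n}\mu(n/d) v_k(\Phi_d)$ combined with Lemma~\ref{lem:pos power}(4) then immediately gives $v_k(\Phi_n^*) \geq 1$ for $n>1$, which is (3). The recursion yields $v_k(F_n) = 1 + 2v_k(G_{n-1})$ and $v_k(G_n) = v_k(F_{n-1}) + v_k(G_{n-1})$, matching the recurrence satisfied by $b(n)$, and one checks that $v_k(F_n) \geq v_k(G_n)$ with strict inequality precisely when $n$ is odd. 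For odd $n$ the $-XG_n$ term of $\Phi_n$ has strictly smaller valuation, so $v_k(\Phi_n) = v_k(G_n) = b(n)$ without further work. The main obstacle is the even case, where the two terms share the same valuation and one must rule out cancellation at the leading $k$-order: writing $u_n,v_n$ for the leading $k$-coefficients of $F_n,G_n$, the sub-recursions $u_n = bv_{n-1}^2$ and $v_n = u_{n-1}v_{n-1}$ show the leading $k$-coefficient of $\Phi_n$ equals $v_{n-1}\bigl(bYv_{n-1}-Xu_{n-1}\bigr)$, which one shows is nonzero by a secondary induction tracking degree structure in $u_n, v_n$ (for instance, noting that $bYv_{n-1}$ and $Xu_{n-1}$ have different leading $X$-behavior).
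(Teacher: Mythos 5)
Your proposal follows essentially the same route as the paper: track the leading $X$-coefficient, the leading $Y$-coefficient, and the $k$-content of $F_n$, $G_n$, and $\Phi_n$ through the recursion, take the M\"obius-weighted product, and invoke Lemma~\ref{lem:pos power} to see the resulting power of $k$ is positive. For part (3) you are in fact somewhat more careful than the paper's sketch, since the M\"obius product only yields a \emph{positive} power of $k$ if one knows the $k$-content of $\Phi_d$ \emph{exactly}, and you correctly identify (and plausibly discharge, via the differing top $X$-degrees of $bYv_{n-1}$ and $Xu_{n-1}$) the no-cancellation point that the paper leaves implicit.
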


\begin{proof}
We have 
\begin{align}
F_1(X,Y)&=k(X^2+bY^2)  & G_1(X,Y)&=XY\nonumber\\
F_n(X,Y)&=k(F_{n-1}^2+bG_{n-1}^2) & G_n(X,Y)&=F_{n-1}G_{n-1}.\label{eqn:double recursion}
\end{align} 
A simple induction argument shows that for $n>1$, we have $\deg_X(F_n) = \deg_X(G_n) + 1$, and in fact $\deg_X(F_n) = 2^n$ and $\deg_X(G_n) = 2^n-1$.   (The same arguments hold for $\deg_Y F_n$ and $\deg_Y G_n$.)

Now,
\[
\text{ coefficient of } X^{2^n} \text{ in } F_n = k \left(\text{coefficient of } X^{2^{n-1}} \text{ in } F_{n-1}\right)^2,
\]
so inductively this coefficient is $k^{2^n-1}$.  Similarly,
\begin{align*}
\text{coeff. of } X^{2^n-1} \text{ in } G_n 
&=  \left(\text{coeff. of } X^{2^{n-1}} \text{ in } F_{n-1}\right)\left(\text{coeff. of } X^{2^{n-1}-1} \text{ in } G_{n-1}\right)\\
&= Y \prod_{i=0}^{n-1}k^{2^i - 1} = Yk^{2^n-n-1}.
\end{align*}

Let $c_d$ be the coefficient of $X^{2^d}$ in $\Phi_d = YF_d - X G_d$ and $c_d^*$ be the coefficient of $X^{\nu_2(d)}$ in $\Phi_d^*$.  So that 
\begin{align*}
c_d & = Y\left( k^{2^d-1} - k^{2^d-d-1}\right) = Y k^{2^d-d-1}\left(k^d-1\right)\\
c_n^* &= \prod_{d\mid n} c_d^{\mu(\frac n d)} = \prod_{d\mid n} \left(Y k^{2^d-d-1}\left(k^d-1\right)\right)^{\mu(\frac n d)} \\
&= \prod_{d\mid n} Y^{\mu(\frac n d)}
 \prod_{d\mid n}\left( k^{2^d-d-1} \right)^{\mu(\frac n d)}
  \prod_{d\mid n} \left(k^d-1\right)^{\mu(\frac n d)}.
\end{align*}
(Here, we use the definition of $\Phi_n^*$ and the fact that we know it is a polynomial in $X$ and $Y$.)  When $n>1$, the first term is~$1$, the second is a positive power of $k$ by Lemma~\ref{lem:pos power}, and the third is $C_n(k)$ exactly as claimed.   

Also,
\begin{align*}
\text{ coefficient of } Y^{2^n+1} \text{ in } \Phi_n
& = \text{ coefficient of } Y^{2^n} \text{ in } F_n\\
& = k \left(\text{coefficient of } Y^{2^{n-1}} \text{ in } F_{n-1}\right)^2,
\end{align*}
so inductively this coefficient is $k^{2^n-1}b^{2^{n-1}}$.  So then
\[
\text{ coefficient of } Y^{\nu_2(n)} \text{ in } \Phi_n^*
= \prod_{d\mid n} \left(k^{2^d-1}b^{2^{d-1}}\right)^{\mu\left(\frac n d\right)},
\]
which is a positive power of $k$ times a positive power of $b$ by Lemma~\ref{lem:pos power}.

The proof for the final claim is similar, but the algebraic details are messier.  We sketch the main points here and leave the details for the reader.  Inductively one may show that 
\[
F_n(X,Y) = k^{a(n)} f_n(X,Y) \text{ and } G_n(X,Y) = k^{b(n)} g_n(X,Y),
\]
where 
\[
a(n) = \frac{2^n - (-1)^n}{3}, \quad b(n) =\left \lceil \frac{2(2^{n-1}-1)}{3}\right\rceil
\]
and $f_n, g_n \in \ZZ[k,b,X,Y]$.  So then 
\[
\Phi_n(X,Y) = k^{b(n)} \Psi_n(X,Y),
\] 
where $\Psi_n  \in \ZZ[k,b,X,Y]$.  Then exactly as above, it follows that a positive power of $k$ divides each dynatomic polynomial $\Phi_n^*$.
\end{proof}

Since $\Phi_n^*(X,Y)$ is homogeneous, we may dehomogenize in the usual way.  We will write $\Phi_n^*(z)$ for the dehomogenized dynatomic polynomial.  Note that the lead coefficient of $\Phi_1^*(z) = k-1 = C_1(k)$ and the constant term is $bk$. 

Mobius inversion gives $\prod_{d\mid n}\Phi_{\phi_{k,b},d}^*(z)  = \Phi_{\phi_{k,b},n} \in \ZZ[k,b,z] $.  In other words, $\Phi_{\phi_{k,b},n}(z)$ factors over $\QQ(k,b)$, so by
Gauss's Lemma  it factors over $\ZZ[k,b]$.   Lemma~\ref{lem:content} and the remark above show  that the polynomials $\Phi_{d,\phi_{k,b}}^*(z)$ in the product each have content a non-negative power of $k$, meaning that $\Phi^*_{n,\phi_{k,b}}(z) \in \ZZ[k,b,z]$.

\begin{lem}\label{lem:monomials}
 Let $n>1$.  Then each monomial of the $n\tth$ dynatomic polynomial  $\Phi_n^*(X,Y)$ has the form $c_iX^{2i}Y^{\nu_2(n)-2i}b^{\frac{\nu_2(n) - 2i}{2}}$ where $c_i,b\in\QQ$.
\end{lem}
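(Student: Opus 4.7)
The plan is to combine a direct induction on the recursion for $F_n$ and $G_n$ with a substitution $U := Y^2b$ and a parity argument, extracting the monomial structure of $\Phi_n^*$ in complementary steps. First, by induction on $n$ using the recursion $F_{n+1} = k(F_n^2+bG_n^2)$ and $G_{n+1} = F_nG_n$ starting from $F_1 = k(X^2+bY^2)$, $G_1 = XY$, every monomial of $F_n(X,Y)$ has the form $a(k)X^{2i}Y^{2^n-2i}b^{(2^n-2i)/2}$ and every monomial of $G_n(X,Y)$ has the form $d(k)X^{2j+1}Y^{2^n-2j-1}b^{(2^n-2j-2)/2}$, with $a(k), d(k) \in \ZZ[k]$. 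Consequently every monomial of $\Phi_n = YF_n - XG_n$ has the form $e(k)X^{2i}Y^{2^n+1-2i}b^{(2^n-2i)/2}$. Setting $U := Y^2b$, these monomials become $e(k)X^{2i}\cdot Y\cdot U^{(2^n-2i)/2}$, so $\Phi_n(X,Y) = Y\cdot P_n(X,U)$ for some $P_n \in \ZZ[k][X,U]$; similarly $\Phi_1^* = \Phi_1 = Y\cdot Q_1(X,U)$ with $Q_1 = (k-1)X^2 + kU$.

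I then run a strong induction on $n \geq 2$ via the factorization $\Phi_n = \prod_{d\mid n}\Phi_d^*$. Assume inductively that $\Phi_d^*(X,Y) = Q_d(X,U) \in \ZZ[k][X,U]$ for each $d\mid n$ with $2 \leq d < n$. Canceling the common factor of $Y$ from $\Phi_n = \Phi_n^* \cdot YQ_1 \prod_{2\le d<n,\,d\mid n} Q_d$ gives
\[
\Phi_n^*(X,Y) \;=\; Q_n(X,U) \;:=\; \frac{P_n}{Q_1 \prod_{\substack{d\mid n\\ 2\le d<n}} Q_d},
\]
which lies in $\ZZ[k][X,U]$ because $\Phi_n^*$ is already known to be a polynomial in $X,Y,b$ (see Lemma~\ref{lem:content}), and any polynomial in $X, Y, b$ that is also a rational function in $X$ and $U = Y^2b$ must in fact be a polynomial in $X$ and $U$.

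Finally, to force the $X$-exponents to be even, observe that $\phi_{k,b}(-z) = -\phi_{k,b}(z)$, so every iterate $\phi_{k,b}^n$ is odd. Hence $F_n(z,1)$ contains only even powers of $z$, $G_n(z,1)$ only odd, and $\Phi_n(z,1)$ is a polynomial in $z^2$. A further induction using $\Phi_n = \prod_{d\mid n}\Phi_d^*$ (plus the fact that a polynomial quotient of two polynomials in $z^2$ is itself a polynomial in $z^2$) shows $\Phi_n^*(z,1)$ is a polynomial in $z^2$. Therefore $Q_n(X,U)$ involves only even powers of $X$: writing $Q_n(X,U) = \sum_i c_i X^{2i} U^{(\nu_2(n)-2i)/2}$ and substituting back $U = Y^2b$ yields the claimed form $\Phi_n^*(X,Y) = \sum_i c_i X^{2i} Y^{\nu_2(n)-2i} b^{(\nu_2(n)-2i)/2}$. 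The most delicate point of the plan is verifying that $P_n/(Q_1 \prod Q_d)$ remains in $\ZZ[k][X,U]$ rather than requiring $Y$ or $b$ to appear separately; this rests on the algebraic identity $\QQ(k)(X,U) \cap \QQ(k)[X,Y,b] = \QQ(k)[X,U]$, provable by inspecting which monomials $X^aY^cb^d$ of $\QQ(k)[X,Y,b]$ lie in the subfield generated by $X$ and $Y^2b$ (namely, those with $c = 2d$).
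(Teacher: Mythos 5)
Your proof is correct, and it follows the same skeleton as the paper's: both arguments run the induction on the recursion $F_{n}=k(F_{n-1}^2+bG_{n-1}^2)$, $G_n=F_{n-1}G_{n-1}$ to pin down the monomial shape of $F_n$, $G_n$, and hence of $\Phi_n=YF_n-XG_n$, and then pass to $\Phi_n^*$ via the M\"obius product. The genuine difference is in how the M\"obius quotient is handled. The paper, after reducing to expressions $\alpha=\sum e_i X^{2i}Y^{D_\alpha-2i}b^{D_\alpha/2-i}$, simply asserts that ``it is then easy to check that $\alpha\beta$ and $\alpha/\beta$ also have the correct form.'' Closure under multiplication is indeed easy; closure under division (when the quotient happens to be a polynomial) is precisely the delicate point, and the paper does not justify it. You do: by changing variables to $U=Y^2b$ you reduce the claim to the identity $\QQ(k)(X,U)\cap\QQ(k)[X,Y,b]=\QQ(k)[X,U]$, which you verify monomial-by-monomial (equivalently, by noting $\QQ(k)[X,U]$ is the ring of invariants of the $\mathbb{G}_m$-action $Y\mapsto tY$, $b\mapsto t^{-2}b$), and you handle the evenness of $X$-exponents separately by the parity symmetry $\phi_{k,b}(-z)=-\phi_{k,b}(z)$ together with the observation that a polynomial quotient of two even polynomials is even. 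Both ingredients are correct, and together they make rigorous exactly the step the paper waves at. One small remark: the fact that $\Phi_n^*\in\ZZ[k,b,X,Y]$, which you invoke to apply the membership identity, is established in the paragraph following Lemma~\ref{lem:content} (via Gauss's lemma and the content computation), not in Lemma~\ref{lem:content} itself; you should cite that remark. Also, your final parity induction and your earlier observation that all $X$-exponents in $P_n$ and $Q_d$ are already even are two routes to the same fact; either suffices, so the proof could be trimmed slightly. Overall this is a correct and more careful version of the paper's argument.
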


\begin{proof}
We first  show that for all $n\geq 1$, 
\begin{align}
F_n(X,Y) &=
\sum_{i=0}^{2^{n-1}}c_iX^{2i}Y^{2^n-2i}b^{2^{n-1}-i},
\text{ and }\label{eqn:F_n form}\\
G_n(X,Y)
&=Y\sum_{j=1}^{2^{n-1}} d_jX^{2j-1}Y^{2^n-2j}b^{2^{n-1}-j},\label{eqn:G_n form}
\end{align}
where $c_i, d_j \in \ZZ[k]$.

From equation~\eqref{eqn:double recursion}, we see that $F_1$ and $G_1$ have the correct form. Assume $F_{n-1}$ and $G_{n-1}$ satisfy the claim above. 
 
 Consider
\[
F_n
 =k\left(  \sum_{i=0}^{2^{n-2}}c_iX^{2i}Y^{2^{n-1}-2i}b^{2^{n-2}-i}  \right)^2
+kb\left(Y\sum_{j=1}^{2^{n-2}} d_jX^{2j-1}Y^{2^{n-1}-2j}b^{2^{n-2}-j}\right)^2.
\]
If we look at the first term monomial by monomial we get 
\begin{align*}
\left(c_iX^{2i}Y^{2^{n-1}-2i}b^{2^{n-2}-i} \right)&
\left(c_jX^{2j}Y^{2^{n-1}-2j}b^{2^{n-2}-j}\right)\\
&=c_ic_jX^{2(i+j)}Y^{2^n-2(i+j)}b^{2^{n-1}-(i+j)}, 
\end{align*}
so each monomial has the correct form.  Now consider monomials from the second term:
\begin{align*}
kbY^2\left( d_iX^{2i-1}Y^{2^{n-1}-2i}b^{2^{n-2}-i}\right)&
\left(d_jX^{2j-1}Y^{2^{n-1}-2j}b^{2^{n-2}-j}\right)\\
&=kd_id_jX^{2(i+j-1)}Y^{2^n-2(i+j-1)}b^{2^{n-1}-(i+j-1)},
\end{align*}
which has the correct form.
This completes the proof for $F_n$, and the proof for $G_n$ is similar.

It follows immediately from equations~\eqref{eqn:F_n form} and~\eqref{eqn:G_n form} that
\[
\Phi_n(X,Y)=
YF_n - XG_n =
Y\sum_{i=0}^{2^n}e_iX^{2i}Y^{2^n-2i}b^{2^{n-1}-i}.
\]

We now compute the dynatomic polynomial:
\begin{align*}
\Phi_n^*(X,Y)&=
\prod_{d \mid n}
\left(Y \sum e_i X^{2i} Y^{2^d-2i} b^{2^{d-1}-i} \right)^{\mu\left(\frac{n}{d}\right)}\\
&=\prod_{d \mid n} Y^{\mu\left(\frac{n}{d}\right)}
\prod_{d \mid n}
\left(\sum
e_i X^{2i} Y^{2^d-2i} b^{2^{d-1}-i}\right)^{\mu\left(\frac n d \right)}\\
&=
\prod_{d \mid n}
\left(\sum
e_i X^{2i} Y^{2^d-2i} b^{2^{d-1}-i}\right)^{\mu\left(\frac n d \right)},
\end{align*}
where the last step follows because $n>1$.


Let
\[
\alpha = \sum{e_iX^{2i}Y^{D_{\alpha}-2i}b^{\frac{D_{\alpha}}{2}-i}}
\text{ and }
\beta = \sum{f_jX^{2j}Y^{D_{\beta}-2j}b^{\frac{D_{\beta}}{2}-j}}.
\]
Clearly the form is not affected if we add or subtract two such monomials. It is then easy to check that $\alpha\beta$ and $\frac{\alpha}{\beta}$ also have the correct form.
\end{proof}



\begin{lem}\label{lem:hompoly}
For all $n \geq 1$, there exists a homogeneous polynomial $\psi_n(w,b)\in \ZZ[w,b]$ such that $\psi_n(\frac{z^2}{b},b)=\Phi_n^*(z,b)$.
\end{lem}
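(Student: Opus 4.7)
The plan is to invoke Lemma~\ref{lem:monomials} and then carry out a single algebraic substitution. Lemma~\ref{lem:monomials} states that for $n > 1$, every monomial of $\Phi_n^*(X, Y)$ has the form $c_i X^{2i} Y^{\nu_2(n) - 2i} b^{(\nu_2(n) - 2i)/2}$. The key algebraic observation is that such a monomial regroups as $c_i (X^2)^i (bY^2)^{\nu_2(n)/2 - i}$, so $\Phi_n^*(X, Y)$ is a genuinely homogeneous polynomial of degree $\nu_2(n)/2$ in the two quantities $X^2$ and $bY^2$. In particular, $\nu_2(n)$ is even for $n > 1$ (as is needed for this grouping to make sense), a fact already implicit in Lemma~\ref{lem:monomials}.

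After dehomogenizing (setting $Y=1$) we have $\Phi_n^*(z, b) = \sum_i c_i z^{2i} b^{\nu_2(n)/2 - i}$. Substituting $w = z^2/b$, i.e.\ $z^{2i} = (wb)^i = w^i b^i$, converts each monomial into $c_i w^i b^{\nu_2(n)/2}$, and pulling out the common factor $b^{\nu_2(n)/2}$ we define
\[
\psi_n(w, b) := b^{\nu_2(n)/2} \sum_{i=0}^{\nu_2(n)/2} c_i w^i.
\]
By construction $\psi_n(z^2/b, b) = \Phi_n^*(z, b)$, and $\psi_n$ is homogeneous in the sense that every monomial carries the same fixed power $\nu_2(n)/2$ of $b$.

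The case $n = 1$ must be treated separately, since Lemma~\ref{lem:monomials} only addresses $n > 1$: from the explicit formula $\Phi_1^*(z, b) = (k-1) z^2 + kb$ noted just after Lemma~\ref{lem:monomials}, one takes $\psi_1(w, b) = (k-1) wb + kb$, and again every monomial carries $b^1 = b^{\nu_2(1)/2}$. The principal obstacle --- if any --- is the interpretation of ``homogeneous'': the natural $\psi_n$ produced by the substitution $w = z^2/b$ is not homogeneous in the standard sense of uniform total degree in $(w, b)$, but each monomial does share a common $b$-exponent. The cleanest way to reconcile this with the stated wording is the equivalent formulation that the honestly homogeneous polynomial $\tilde{\psi}_n(u, v) := \sum_i c_i u^i v^{\nu_2(n)/2 - i}$ of degree $\nu_2(n)/2$ satisfies $\tilde{\psi}_n(X^2, bY^2) = \Phi_n^*(X, Y)$, and then $\psi_n(w, b) = \tilde{\psi}_n(wb, b)$ recovers the substitution in the statement.
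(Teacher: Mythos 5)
Your proof follows the same route as the paper: apply Lemma~\ref{lem:monomials}, regroup each monomial of $\Phi_n^*$ in terms of $X^2$ and $bY^2$, make a substitution, and handle $n=1$ by direct computation. You have also correctly flagged a genuine inconsistency in the statement. Writing $D=\nu_2(n)/2$, the paper's proof substitutes $w=z^2$ (not $w=z^2/b$) and produces the homogeneous polynomial $\tilde\psi_n(w,b)=\sum_i c_i w^i b^{D-i}$, which satisfies $\tilde\psi_n(z^2,b)=\Phi_n^*(z,b)$; it does \emph{not} satisfy $\tilde\psi_n(z^2/b,b)=\Phi_n^*(z,b)$, since that quantity equals $\sum_i c_i z^{2i}b^{D-2i}$. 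Conversely, as you observe, any polynomial satisfying the displayed identity literally --- your $\psi_n(w,b)=b^D\sum_i c_i w^i=\tilde\psi_n(wb,b)$ --- fails to be homogeneous, so the lemma as worded cannot hold. The intended statement is $\psi_n(z^2,b)=\Phi_n^*(z,b)$, or equivalently $\tilde\psi_n(X^2,bY^2)=\Phi_n^*(X,Y)$, and your $\tilde\psi_n$ coincides with the paper's $\psi_n$. With this corrected reading the application in Section~\ref{sec:ExplicitBound} still goes through unchanged: from $\Phi_n^*(z_0,b_0)=0$ one gets $\psi_n(z_0^2,b_0)=0$, and dividing by $b_0^D$ shows $z_0^2/b_0$ is a rational root of $\psi_n(w,1)$. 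Your reconciliation is exactly right; the only caveat is that your ``homogeneous in the sense of a common $b$-exponent'' gloss on your first $\psi_n$ is nonstandard and should be dropped in favor of the $\tilde\psi_n$ formulation.
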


\begin{proof}
From Lemma~\ref{lem:monomials}, when $n>1$ each monomial of $\Phi_n^*(z)$ has the form $c_iz^{2i}b^{\frac{\nu_2(n) - 2i}{2}}$.  A straightforward calculation shows that $\Phi_1^*(z)$ also has this form.

Now substitute $w = z^2$ to get
$\Phi_n^*(w)$ with monomials of the form $c_iw^{i}b^{\frac{\nu_2(n) }{2} -i}$, which is homogeneous in $w$ and $b$, with degree $\frac{\nu_2(n) }{2}$.
\end{proof}

\begin{define}Let $F(X,Y)$ be a homogeneous polynomial. We define $\ell(F)$
to be the leading coefficient of the dehomogenized polynomial $F(z,1)$ and $c(F)$ to be the constant term of $F(z,1)$. 
\end{define}



%
%
%


\begin{lem}\label{lem:lead_and_const}
Let 
\begin{align*}
\phi_b(X,Y) \colon \PP^1& \to \PP^1\\
(X,Y)& \mapsto (X^2+bY^2, XY).
\end{align*}
Then
\begin{displaymath}
   \ell(\Phi^*_n(X,Y)) =
     \begin{cases}
       p &   \text{if }n=p^e, e\geq 1 \\
       0 &   \text{if } n = 1 \\
       1 &   \text{otherwise,}
     \end{cases}
\end{displaymath} 
and  $c(\Phi_n^*)$ is a non-negative power of~$b$.
\end{lem}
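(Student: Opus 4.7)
The plan is to specialize Lemma~\ref{lem:content} to $k = 1$ (noting that $\phi_b = \phi_{1,b}$) and then invoke the standard evaluation of cyclotomic polynomials at $x = 1$. The work is essentially packaged already; this lemma records the specialization.

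First I will dispose of the case $n = 1$ by direct computation: $\Phi_1(X,Y) = YF_1 - XG_1 = Y(X^2+bY^2) - X(XY) = bY^3$, so $\Phi_1^* = bY^3$. Reading off the coefficient of $X^2$ (the would-be top-degree term, consistent with the convention established in the paragraph preceding the lemma, where the lead coefficient of $\Phi_1^*$ for the general family $\phi_{k,b}$ is recorded as $k-1 = C_1(k)$) gives $\ell(\Phi_1^*) = 0$, while the constant term is $c(\Phi_1^*) = \Phi_1^*(0,1) = b$, a positive power of $b$.

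For $n > 1$, I will apply Lemma~\ref{lem:content}(1) at $k = 1$: the coefficient of $X^{\nu_2(n)}$ in $\Phi_n^*$ is a positive power of $k$ times $C_n(k)$, which at $k = 1$ reduces to $C_n(1)$. It is a classical fact that $C_n(1) = p$ when $n = p^e$ is a prime power with $e \geq 1$, and $C_n(1) = 1$ otherwise; this follows by dividing the identity $x^n - 1 = \prod_{d \mid n} C_d(x)$ through by $x-1$ and taking $x \to 1$, which (using $\sum_{d\mid n}\mu(n/d) = 0$ for $n > 1$) yields
\[
C_n(1) = \prod_{d\mid n} d^{\mu(n/d)} = \exp\bigl(\Lambda(n)\bigr),
\]
where $\Lambda$ is the von Mangoldt function. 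This gives the two nontrivial cases for $\ell(\Phi_n^*)$. For the constant term when $n > 1$, I will apply Lemma~\ref{lem:content}(2) at $k = 1$: the coefficient of $Y^{\nu_2(n)}$ in $\Phi_n^*$ is a positive power of $k$ times a positive power of $b$, which at $k = 1$ is just a positive power of $b$. Since $\Phi_n^*$ is homogeneous of total degree $\nu_2(n)$ for $n > 1$, this coefficient equals $\Phi_n^*(0,1) = c(\Phi_n^*)$.

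No serious obstacle arises, since the combinatorial bookkeeping is already done in Lemma~\ref{lem:content}. The one mildly delicate point is the $n = 1$ case, whose dynatomic polynomial degenerates to a constant in $z$ after dehomogenization (because the would-be leading term $(k-1)z^2$ vanishes at $k = 1$) and therefore has to be handled by direct computation rather than by quoting the previous lemma.
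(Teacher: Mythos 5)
Your proof is correct and follows essentially the same route as the paper: apply Lemma~\ref{lem:content} at $k=1$, evaluate $C_n(1)$ using the classical fact that it equals $p$ when $n=p^e$ and $1$ otherwise, and treat the constant term as the coefficient of $Y^{\nu_2(n)}$ via homogeneity. The only cosmetic difference is that you re-derive the $n=1$ case by hand rather than citing the remark after Lemma~\ref{lem:content}, which is a harmless expansion of the same observation.
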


\begin{proof}
From Lemma~\ref{lem:content} part~(1), $\ell(\Phi_n^*)$ is a non-negative power of $k$ times $C_n(k)$, where $C_n(k)$ is the $n\tth$ cyclotomic polynomial in the variable~$k$.  The result follows from evaluating this at $k=1$.  Similarly, the result for $c(\Phi_n^*)$ follows from part~(2) of Lemma~\ref{lem:content} and evaluating at $k=1$.  The result for $n=1$ follows from the remark after Lemma~\ref{lem:content} that $\ell(\Phi^*_n(X,Y)) = k-1$ in this case.
\end{proof}

\section{The Explicit Bound}\label{sec:ExplicitBound}
In this section, we find an explicit uniform bound  for the number of $\QQ$ preperiodic points for a one-parameter family of quadratic rational maps; namely, the quadratic maps with a unique fixed point.  The existence of such a bound follows immediately from Theorem~\ref{cor:twist bound}, so the content of this work is in finding the bound explicitly.
By construction, a rational point $(z_0, b_0)$ on the variety $V\left(\Phi_n^*(z,b)\right)$ corresponds to a quadratic rational map $\phi(z) = z+\frac {b_0} z$, and a rational point $z_0$ of period $n$ for $\phi$.  Note that $b_0 \neq 0$, since that value does not give a degree~$2$ rational map.

By Lemma~\ref{lem:hompoly}, we may substitute $w = z^2$ in $\Phi_n^*$, and the resulting polynomial $\psi_n(w,b) \in \ZZ[w,b]$ is homogeneous in $w$ and $b$.  So  if $V\left(\Phi_n^*(z,b)\right)$ has a  rational point $(z_0, b_0)$ with $b_0 \neq 0$, then $V\left(\psi_n(w,b)\right)$ has a  rational point $\left(\frac{z_0^2}{b_0}, b_0\right)$.    Since $\psi_n(w,b)$ is homogeneous, we may equivalently ask if $\psi_n(w,1)$ has a rational root.

\begin{thm}\label{thm:main_theorem}
The rational map $\phi(z)=z+\frac{b}{z}$ where $b\in\QQ$ has no rational points with exact period $n\geq5$.
\end{thm}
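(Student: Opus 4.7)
The plan is to combine Lemmas~\ref{lem:hompoly} and~\ref{lem:lead_and_const} with the rational root theorem, reducing the problem to analyzing a short list of candidate orbits. Suppose for contradiction that $z_0\in\QQ$ is a point of exact period $n\ge 5$ for $\phi_{b_0}(z)=z+b_0/z$ with $b_0\in\QQ^{*}$, and set $u_0:=z_0^2/b_0\in\QQ^{*}$. By Lemma~\ref{lem:hompoly} and the homogeneity of $\psi_n(w,b)$ together with $b_0\ne 0$, $u_0$ is a rational root of $\psi_n(u,1)\in\ZZ[u]$. Since the substitution $w=z^2$ transports the leading coefficient and constant term from $\Phi_n^*(z,1)$ to $\psi_n(u,1)$, Lemma~\ref{lem:lead_and_const} shows that $\psi_n(u,1)$ has constant term $1$ and leading coefficient equal to $1$ unless $n=p^e$ is a prime power, in which case it equals $p$. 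The rational root theorem then forces either $u_0\in\{\pm 1\}$, or $u_0\in\{\pm 1,\pm 1/p\}$ when $n=p^e$.

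Next, the change of coordinates $f(z)=z/z_0\in\PGL_2(\QQ)$ satisfies $\phi_{b_0}^{f}=\phi_c$ with $c:=1/u_0\in\{\pm 1,\pm p\}$, and $f(z_0)=1$. Hence $z_0$ has exact period $n$ under $\phi_{b_0}$ if and only if $1$ has exact period $n$ under $\phi_c$, and it suffices to rule out each of the finitely many possible values of $c$.

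For $c>0$ (i.e., $c=1$ or $c=p$), the map $\alpha\mapsto\alpha+c/\alpha$ is strictly increasing on the positive reals, so the orbit of $1$ is a strictly increasing sequence of positive rationals and $1$ is not periodic. For $c=-1$, the orbit $1\mapsto 0\mapsto\infty\mapsto\infty\mapsto\cdots$ is preperiodic but not periodic. The main obstacle is the case $c=-p$, which I would treat as follows: if $1$ were periodic of period $n\ge 2$ under $\phi_{-p}$, then $\phi_{-p}^{\,n-1}(1)\in\QQ$ would be a rational preimage of $1$, i.e., a rational root of $\alpha^2-\alpha-p=0$. Writing $1+4p=m^2$ with $m$ odd and setting $m=2k+1$ yields $p=k(k+1)$, and primality of $p$ then forces $k=1$ and $p=2$. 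Hence for every odd prime $p$, the point $1$ has no rational $\phi_{-p}$-preimage and so cannot be periodic, while for $p=2$ one checks directly that $1\mapsto -1\mapsto 1$ has exact period $2\ne n$ for every $n\ge 5$. In each case the assumption of exact period $n\ge 5$ is contradicted.
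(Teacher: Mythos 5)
Your proposal is correct in substance and takes a genuinely different route from the paper after the common setup. Both arguments use Lemmas~\ref{lem:hompoly} and~\ref{lem:lead_and_const} and the rational root theorem to pin down $u_0=z_0^2/b_0$ to the short list $\{\pm 1\}$ (respectively $\{\pm 1,\pm 1/p\}$ when $n=p^e$). At that point the paper argues directly: since $\psi_n(w,1)$ has at most four rational roots, a rational $n$-cycle can contribute at most four distinct values of $w$, so $n\le 4$. That step is actually a little delicate, because the map $z\mapsto z^2/b$ is $2$-to-$1$, so a priori one needs to observe that two points $z_i,z_j$ of the same cycle with $z_i^2=z_j^2$ forces $z_i=-z_j$, which (via the automorphism $z\mapsto -z$) forces $n$ even and $j=i+n/2$, and then handle the residual cases. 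Your approach sidesteps this entirely and is, in my view, cleaner: conjugating by $f(z)=z/z_0\in\PGL_2(\QQ)$ replaces $\phi_{b_0}$ by $\phi_c$ with $c=b_0/z_0^2=1/u_0\in\{\pm 1,\pm p\}$ and replaces the periodic point $z_0$ by the concrete point $1$, reducing the whole theorem to a finite, explicit check. The subsequent case analysis is correct: for $c>0$ the orbit of $1$ is strictly increasing and hence infinite; for $c=-1$ the point $1$ is strictly preperiodic; and for $c=-p$ the preimage computation $\alpha^2-\alpha-p=0$, together with the factorization $p=k(k+1)$ forced by the discriminant being a square, leaves only $p=2$, where $1$ has period $2$. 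This buys you a fully explicit, elementary argument with no hidden counting of fibers of $z\mapsto z^2$.

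One small inaccuracy to fix: in the $c>0$ case you write that ``the map $\alpha\mapsto\alpha+c/\alpha$ is strictly increasing on the positive reals,'' which is false (its derivative $1-c/\alpha^2$ is negative for $0<\alpha<\sqrt c$). What you actually need, and what makes the orbit of $1$ strictly increasing, is the inequality $\phi_c(\alpha)=\alpha+c/\alpha>\alpha$ for all $\alpha>0$ when $c>0$; that suffices to show $1$ is not periodic. With that one-line correction the proof is sound.
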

\begin{proof}
From Lemma~\ref{lem:lead_and_const}, we know that  for $n>1$ the lead coefficient of  $\psi_n(w,1) \in \ZZ[w]$ is either $1$ (if $n$ is not a prime power) or $p$ (if $n = p^e$) and the constant coefficient is ~$1$.  The only rational roots of such a polynomial are $\pm 1$ when $n$ is not a prime power,  and $\pm 1$ or  $\pm\frac{1}{p}$ when $n$ is a power of a prime.

In either case, we can have no more than four rational roots of $\psi_n$, which means no more than four rational points are on any given cycle, and this is independent of the parameter~$b$. 
\end{proof} 

We now combine Theorem~\ref{thm:main_theorem} with earlier work on the two-parameter family $\phi(z) = kz + \frac b z$ to find exactly what periods are possible for rational periodic points.

\begin{cor}\label{cor:realized_periods}
If $P \in \PP^1_\QQ$ is a periodic point for $\phi(z) = z + \frac b z$ where $b\in\QQ$, then $P$ is either the point at infinity or a point of period~$2$.
\end{cor}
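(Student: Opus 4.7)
The plan is to assemble three ingredients: a direct computation handling period $1$, the theorem just proved handling period $n\ge 5$, and a citation to earlier work on the richer family $kz + b/z$ to cover the two remaining periods $n=3$ and $n=4$.

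First I would dispose of the fixed points. Writing $\phi$ in homogeneous form, $\phi[X:Y]=[X^2+bY^2:XY]$, so $\phi[1:0]=[1:0]$ and $[1:0]=\infty$ is fixed. On the affine chart, $\phi(z)=z$ forces $b/z=0$, which has no solution in $\QQ$ since $b\in\QQ^*$ (as $b=0$ degenerates $\phi$ to the identity and falls outside the quadratic family). Hence the point at infinity is the unique $\QQ$-rational fixed point, which accounts for the ``period $n=1$'' case in the statement.

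Next, Theorem~\ref{thm:main_theorem} directly eliminates every exact period $n\ge 5$: any such rational periodic point would come from a rational root of $\psi_n(w,1)$ whose leading and constant coefficients (Lemma~\ref{lem:lead_and_const}) admit at most the four candidates $\pm 1,\pm 1/p$, and the theorem shows none of these yield genuine $n$-cycles. So the only periods left to rule out are $n=3$ and $n=4$.

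The main obstacle is ruling out periods $3$ and $4$. Rational-root-theorem considerations alone are not enough, because a rational root $w_{0}$ of $\psi_{n}(w,1)$ only produces a $\QQ$-rational periodic point if $bw_{0}$ is a square in $\QQ$; the statement must hold uniformly in $b$. To handle this, I would invoke the previously established classification of $\QQ$-rational periodic points for the two-parameter family $\phi_{k,b}(z)=kz+\tfrac{b}{z}$, specializing the results to $k=1$. That classification (proved in the papers referenced in the preceding discussion of ``earlier work on the two-parameter family'') shows that no $\QQ$-rational points of exact period $3$ or $4$ exist for any member of the $k=1$ slice. Combined with the two steps above, this leaves only the possibilities listed in the corollary: $P=\infty$, or $P$ of period~$2$.
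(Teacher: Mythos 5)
Your argument is correct and matches the paper's proof essentially step for step: the point at infinity is the unique rational fixed point, Theorem~\ref{thm:main_theorem} eliminates periods $n\ge 5$, and periods $3$ and $4$ are ruled out by specializing the classification in~\cite{manespreper} (their Theorems~$3$ and~$4$) for the two-parameter family $kz+b/z$ to the slice $k=1$. The only difference is cosmetic (you verify the uniqueness of the fixed point by direct computation rather than invoking the setup ``by construction''), and your side remark about why a rational-root argument alone cannot dispose of $n=3,4$ is a sound observation even though the precise square condition is $w_0$ being a square rather than $bw_0$.
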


\begin{proof}
From Theorem~\ref{thm:main_theorem}, we know that the period of $P$ must be less than~$5$.  

From~\cite[Theorem~4]{manespreper}, a map of the form $\phi(z) = kz + \frac b z$ with $k, b \in \QQ^*$  has a rational point of smallest period~$4$ if and only if there is some $m \in \QQ \smallsetminus \{ 0, \pm1 \}$ such that $k = 2m/(m^2-1)$ and $b = -m/(m^4-1)$.  However, there is no such $m$ with  $1 = 2m/(m^2-1)$.   We conclude that $P$ has period less than~$4$.

Similarly, \cite[Theorem~3]{manespreper} says that if $\phi(z) = kz + \frac b z$ with $k,b \in \QQ^*$, then $\phi(z)$ has no rational point of smallest period~$3$.  Hence, $P$ has period one or two.

By construction, $P$ is a fixed point if and only if $P$ is the point at infinity.  The only other possibility is a rational point of period~$2$.
\end{proof}

To finish finding the exact bound on the number of rational preperiodic points for a map of the form $\phi(z) = z + \frac b z$, we introduce a bit more notation.

\begin{define}
Let $m$ and $n$ be positive integers. Given a rational map $\phi$ and a point $P$ that is strictly preperiodic for $\phi$ (in other words, $P$ is preperiodic but not periodic), we say that $P$ has type $m_n$ if $P$ enters a cycle of exact length $m$ after $n$ iterations. That is, $\phi^{n+m}(P) = \phi^n(P)$, where $m\geq 1$ and $n>1$ are the smallest such integers.
\end{define}

\begin{cor}
Let $\phi(z) = z + \frac b z \in \QQ(z)$ with $b \neq 0$.  Then $\phi$ has either~$2$, $4$, or~$6$ rational preperiodic points.
\end{cor}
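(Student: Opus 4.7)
The plan is to use Corollary~\ref{cor:realized_periods} to pin down the periodic structure and then walk the preimage tree under $\phi(z)=z+b/z$ until no new rational points appear. By Corollary~\ref{cor:realized_periods}, the only rational periodic points are $\infty$ and possibly a rational $2$-cycle $\{P,-P\}$; note that $\phi(P)=-P$ forces $P^2=-b/2$, so a rational $2$-cycle exists iff $-b/2$ is a square in $\QQ^*$, in which case $P=\pm c$ with $b=-2c^2$.

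Next I would compute the rational preimage tree of $\infty$. Since $\phi^{-1}(\infty)=\{0,\infty\}$, the point $0$ is always strictly preperiodic of type $1_1$. The preimages of $0$ are the roots of $z^2+b=0$, which lie in $\QQ$ iff $-b$ is a square, say $b=-d^2$, in which case $\pm d$ are strictly preperiodic of type $1_2$. Solving $z^2\mp d z - d^2 = 0$ for the next level of preimages gives discriminant $5d^2$, which is never a rational square, so the tree above $\infty$ contributes either $\{\infty,0\}$ (two points) or $\{\infty,0,\pm d\}$ (four points).

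When a rational $2$-cycle exists, i.e.\ $b=-2c^2$, I would similarly compute preimages of $\pm c$. Solving $z^2 - cz - 2c^2 = 0$ gives discriminant $9c^2$, producing the rational root $2c$ (besides the cycle point $-c$); symmetrically $-2c$ is a preimage of $-c$. Going one level further requires $z^2\mp 2cz - 2c^2 = 0$, with discriminant $12c^2$, never a rational square. Also, in this case $-b=2c^2$ is not a square, so $0$ has no rational preimages. Hence the preperiodic set is exactly $\{\infty, 0, \pm c, \pm 2c\}$, giving six points.

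Finally I would check that the two extra-large cases are mutually exclusive: if both $-b$ and $-b/2$ were squares in $\QQ$, then $\sqrt{2}\in\QQ$, a contradiction. Thus exactly one of the three mutually exclusive scenarios occurs — neither a $2$-cycle nor a rational preimage of $0$ (two points), a rational preimage of $0$ but no $2$-cycle (four points), or a rational $2$-cycle but no rational preimage of $0$ (six points) — and in every case the count is $2$, $4$, or $6$. The main bookkeeping step is just verifying the discriminants at each level; there is no substantive obstacle beyond this once Corollary~\ref{cor:realized_periods} is in hand.
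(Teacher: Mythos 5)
Your proof is correct and complete, but it takes a genuinely different route from the paper's. The paper, after fixing the possible cycle lengths via Corollary~\ref{cor:realized_periods}, defers almost all of the tail analysis to a sequence of cited propositions from~\cite{manespreper}: Proposition~6 for existence of type-$1_2$ points, Propositions~7 and~8 to exclude types $1_n$ ($n>2$) and $2_n$ ($n>1$), Proposition~2 for existence of period-$2$ points and for the mutual exclusivity statement (which there takes the form of an unsolvable parametrization $1 = 1/(x^2-1)$). You instead recompute the preimage tree of each rational cycle from scratch by solving the relevant quadratics and inspecting discriminants ($5d^2$, $9c^2$, $12c^2$), which makes the argument self-contained and elementary; your mutual-exclusivity step ($-b$ and $-b/2$ both squares would force $\sqrt{2}\in\QQ$) is also more direct than the paper's. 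The tradeoff is that the paper's citations would generalize readily to the broader family $kz+b/z$, whereas your discriminant computations are tailored to $k=1$; for the specific corollary at hand, your version is arguably cleaner, and it reproduces exactly the three graphs in Figure~\ref{fig:final_fig}.
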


\begin{proof}
First note that for every nonzero $b \in \QQ$, the point at infinity is fixed, and $\phi(0)$ is infinity.  Hence, every  $\phi_b$ has a rational fixed point and a rational point of type~$1_1$.

Applying~\cite[Proposition~$6$]{manespreper}, we see that $\phi$ has a rational points of type $1_2$ if and only if $b = -c^2$ for some $c\in\QQ^*$.  However, all of these maps are conjugate over $\QQ$, so   take $b=-1$ as a representative.
From~\cite[Propositions~$7$ and~$8$]{manespreper}, we conclude that $\phi$ has no rational points of type $1_n$ for $n>2$.  

Applying~\cite[Proposition~$2$]{manespreper}, we see that $\phi$ has a rational point of period~$2$ if and only if $b = -2c^2$ for some $c\in\QQ^*$.  Again, all such maps are conjugate over $\QQ$, so we  take $b=-2$ as a representative.  It is a simple matter to check that in this case, $\phi_b$ has two rational points of type $2_1$.  (See Figure~\ref{fig:final_fig}.)
From~\cite[Proposition~$8$]{manespreper}, we conclude that $\phi_b$ has no rational points of type $2_n$ for $n>1$.  

Finally,~\cite[Proposition~$2$]{manespreper} says that $\phi_b$ has both rational points of type $1_2$ and rational points of period~$2$ if and only if we can solve $1 = 1/(x^2-1)$ with $x \in \QQ \smallsetminus \{0, \pm1 \}$.  Evidently, this is not possible.  
Hence for $b = -1$ (and all $\QQ$-conjugate maps), $\phi_b$ has four rational preperiodic points.  Similarly, for $b=-2$ (and all $\QQ$-conjugate maps), $\phi_b$ has six rational preperiodic points.
There are no $b$ values for which $\phi_b$ has more than six rational preperiodic points.
\end{proof}

We provide a graphical representation of all possible structures for rational preperiodic points for the family $\phi(z) = z + \frac b z$.  In these graphs, the vertices represent points in $\PP^1_{\QQ}$, and an arrow from vertex~$P$ to vertex~$Q$ indicates~$\phi(P)=Q$.

\begin{figure}[h]
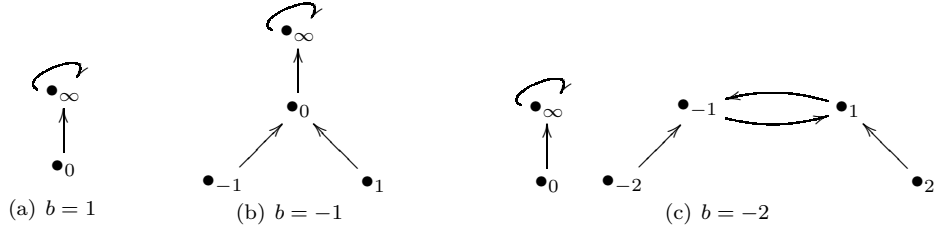

\centering
\mbox{
\subfigure[$b=1$]{
	\xygraph{ 
		!{<0cm,0cm>;<1cm,0cm>:<0cm,1cm>::} 
		!{(1,2) }*+{\bullet_{\infty}}="a" 
		!{(1,1) }*+{\bullet_{0}}="b" 
		"a" :@(l,ru) "a"
		"b":"a" 
	} 
}
\qquad\quad
\subfigure[$b=-1$]{
	\xygraph{ 
		!{<0cm,0cm>;<1cm,0cm>:<0cm,1cm>::} 
		!{(1,2) }*+{\bullet_{\infty}}="a" 
		!{(1,1) }*+{\bullet_{0}}="b" 
		!{(0,0) }*+{\bullet_{-1}}="c" 
		!{(2,0)}*+{\bullet_{1}}="d" 
		"a" :@(l,ru)  "a"
		"b":"a" 
		"d":"b" 
		"c":"b" 
	} 
}
}
\qquad\quad
\subfigure[$b=-2$]{
	\xygraph{ 
		!{<0cm,0cm>;<2cm,0cm>:<0cm,1cm>::} 
		!{(0,1) }*+{\bullet_{\infty}}="a" 
		!{(0,0) }*+{\bullet_{0}}="b" 
		 !{(.5,0) }*+{\bullet_{-2}}="c"
               !{(1,1) }*+{\bullet_{-1}}="d"
               !{(2.5,0) }*+{\bullet_{2}}="e"
               !{(2,1)}*+{\bullet_{1}}="f"
		"a" :@(l,ru)  "a" 
		"b":"a" 
		 "c":"d"
               "e":"f"
       "d":@/_/"f"
       "f":@/_/"d"
	}}
\caption{All possible rational preperiodic graphs for $\phi_b(z) = z + \frac b z$.}
\label{fig:final_fig}
\end{figure}

\section{Another Family of Twists}\label{sec:another fam}
We conclude with an abbreviated analysis of the possible preperiodic structures for another family of twists.   Lemma ~\ref{lem:content} says the lead coefficient of the dynatomic polynomials are powers of $k$ times a cyclotomic polynomial in $k$.  With the help of~\cite[Proposition 1]{Cyclotomic}, we can evaluate cyclotomic polynomials at roots of unity.  Hence, we consider $k=-1$.  The proofs are similar to those in Section~\ref{sec:ExplicitBound}, so the details will be sketched here.

We now consider the maps $\psi_b(z) = -\left(z + \frac b z\right)$.  Again, each $\psi_b$ is conjugate to $\psi_1$ via the map $f(z) = z \sqrt b$.  The family of twists is distinct from the one already considered, since $\psi_b$ has two finite fixed points at $\pm \sqrt{-b/2}$.

Note that Lemmas~\ref{lem:content}, \ref{lem:monomials}, and~\ref{lem:hompoly} apply to the family $\psi_b$, as we are taking $k=-1$.

\begin{lem}\label{lem:lead_and_const2}
Let 
\begin{align*}
\psi_b(X,Y) \colon \PP^1& \to \PP^1\\
(X,Y)& \mapsto \left(-(X^2+bY^2), XY\right).
\end{align*}
Then
\begin{displaymath}
   \ell(\Phi^*_n(X,Y)) =
     \begin{cases}
       \pm p &   \text{if }n=2p^e, e\geq 1, p\text{ prime} \\
       -2       &   \text{if }n = 1\\
       \pm 1 &   \text{otherwise,}
     \end{cases}
\end{displaymath} 
and  $c(\Phi_n^*)$ is a non-negative power of~$b$.
\end{lem}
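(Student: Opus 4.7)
The plan is to specialize the general identities of Lemma~\ref{lem:content} to the parameter value $k = -1$, following the same template as the proof of Lemma~\ref{lem:lead_and_const}. For the first claim, I would invoke Lemma~\ref{lem:content}(1): for $n > 1$, the coefficient of $X^{\nu_2(n)}$ in $\Phi_n^*(X,Y)$ is a positive power of $k$ times the cyclotomic polynomial $C_n(k)$. Evaluating at $k = -1$, the positive power of $k$ collapses to $\pm 1$ according to its parity, yielding
\[
\ell(\Phi_n^*) \;=\; \pm\, C_n(-1).
\]
The $n = 1$ case falls outside Lemma~\ref{lem:content} and is handled instead by the remark stating $\ell(\Phi_1^*) = k-1$, which at $k = -1$ evaluates to $-2$.

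Next I would tabulate the values of $C_n(-1)$ using \cite[Proposition~1]{Cyclotomic}, which supplies the relevant facts: $C_n(-1) = 1$ for odd $n > 1$; $C_{2^a}(-1) = 2$ for $a \geq 2$; $C_{2p^e}(-1) = p$ for odd prime $p$ and $e \geq 1$; and $C_n(-1) = 1$ for the remaining even $n$. Observing that the $p = 2$ subcase of the shape $n = 2p^e$ is precisely $n = 2^{e+1}$, whose cyclotomic value is $2$, these outputs align exactly with the three branches in the stated formula. For the constant-term claim I would appeal to Lemma~\ref{lem:content}(2): the coefficient of $Y^{\nu_2(n)}$ in $\Phi_n^*$ is a positive power of $k$ times a positive power of $b$. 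Dehomogenizing at $Y = 1$ identifies this coefficient with $c(\Phi_n^*)$, so substituting $k = -1$ produces $\pm b^m$ for some $m \geq 1$, giving the asserted non-negative power of $b$ (up to sign).

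The main obstacle, relative to the companion Lemma~\ref{lem:lead_and_const}, is the more delicate combinatorics of $C_n(-1)$: whereas $C_n(1)$ yields the clean dichotomy between prime powers and non-prime-powers, the values $C_n(-1)$ fold the prime $p = 2$ into the $2p^e$ family, vanish at $n = 2$, and behave differently on $n = 2^a m$ for mixed $a, m$. Keeping track of which even $n$ land in each branch, and reconciling the degenerate $n = 2$ case with the ``otherwise'' outcome, is the step that requires the most care; everything else is a direct plug-in to the results of Lemma~\ref{lem:content}.
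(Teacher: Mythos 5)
Your proposal is correct and takes essentially the same route as the paper: specialize Lemma~\ref{lem:content} to $k=-1$ and evaluate $C_n(-1)$ via \cite[Proposition~1]{Cyclotomic}, with the $n=1$ case computed directly from $\ell(\Phi_1^*)=k-1$. In fact your write-up is a bit more complete than the paper's terse proof, which does not explicitly address the constant-term claim or the sign ambiguity coming from the odd power of $k=-1$. Your instinct to flag $n=2$ is well-founded: $C_2(-1)=0$, so $\ell(\Phi_2^*)$ actually vanishes rather than equaling $\pm 1$ (indeed $\Phi_2^*(z)=k(k+1)z^2+k^2b$ degenerates to the constant $b$ at $k=-1$); this is a small imprecision in the lemma's statement itself, but it is harmless here since the lemma is invoked only for $n\geq 5$, where $C_n(-1)\neq 0$.
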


\begin{proof}
The case $n=1$ is found by a simple computation.
By Lemma~\ref{lem:content}, the lead coefficient of $\psi_b$ is some power of $k$ times a cyclotomic polynomial. We apply~\cite[Proposition 1]{Cyclotomic} to evaluate the cyclotomic polynomials at $k=-1$, yielding the result. \end{proof}



\begin{prop}Let $\psi_b(z) = -\left(z + \frac b z\right) \in \QQ(z)$ with $b \neq 0$.  Then $\psi_b$ has either~$2$ or $4$ rational preperiodic points.
\end{prop}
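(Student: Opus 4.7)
The plan is to mirror Section~\ref{sec:ExplicitBound} with $k=-1$: first bound the possible exact periods of rational periodic points for $\psi_b$, then rule out each small period individually, and finally enumerate the preperiodic tails attached to the remaining rational periodic points.

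For the periodic bound I would apply Lemma~\ref{lem:hompoly} to substitute $w = z^2/b$ in $\Phi^*_{\psi_b, n}$, obtaining $\psi_n(w,1) \in \ZZ[w]$. Lemma~\ref{lem:lead_and_const2} gives the leading coefficient as $\pm 1$ (or $\pm p$ when $n = 2p^e$) and the constant as $\pm 1$, so the rational root theorem restricts the rational roots of $\psi_n(w,1)$ to at most the four values $\pm 1, \pm 1/p$. Arguing exactly as in Theorem~\ref{thm:main_theorem}, a hypothetical rational $n$-cycle descends through $z \mapsto z^2/b$ to rational roots of $\psi_n(w,1)$, which caps the cycle length and rules out all rational periods $n \geq 5$.

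For the remaining small periods, the identity $\psi_b^2(z) = z$ simplifies algebraically to $b(2z^2 + b) = 0$, whose solutions $z = \pm\sqrt{-b/2}$ are precisely the finite fixed points; so no rational point has exact period~$2$. Periods~$3$ and~$4$ are then handled via \cite[Theorems~3 and~4]{manespreper} applied to $kz + b'/z$ at $k=-1$: Theorem~3 excludes period~$3$ for any rational coefficients, and Theorem~4 requires $k = 2m/(m^2-1)$ for a rational period-$4$ point, which at $k=-1$ gives the irrational equation $m^2 + 2m - 1 = 0$. Thus every rational periodic point of $\psi_b$ is fixed: $\infty$ is always rational, and $\pm\sqrt{-b/2}$ are rational exactly when $-b/2 \in (\QQ^*)^2$.

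To count preperiodic points, classify the rational preperiodic tails. The pole $0$ always satisfies $\psi_b(0)=\infty$, giving a type $1_1$ tail; further preimages of $\infty$ are roots of $z^2 + b = 0$, rational iff $-b \in (\QQ^*)^2$, and contribute two type $1_2$ points. The key arithmetic observation is that $-b/2 \in (\QQ^*)^2$ forces $-b = 2(\sqrt{-b/2})^2$ to be twice a nonzero rational square, hence not itself a rational square, so the two rationality conditions ``$-b/2 \in (\QQ^*)^2$'' and ``$-b \in (\QQ^*)^2$'' are mutually exclusive. Finally I would invoke \cite[Propositions~7 and~8]{manespreper} (specialized to $k=-1$) to rule out any longer preperiodic tail of type $1_n$ with $n \geq 3$.

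The main obstacle is the bookkeeping in Step~1: converting ``at most four rational roots of $\psi_n(w,1)$'' into a genuine bound on the length of a rational cycle under $\psi_b$ requires a careful treatment of the involution $z \mapsto -z$, which commutes with $\psi_b$ up to sign and identifies rational period-$n$ points in pairs. A secondary subtlety is combining the resulting case analysis in Step~3 to match the stated alternatives of $2$ or $4$ preperiodic points, where the mutual exclusion of the two square conditions is what prevents the cases from overlapping.
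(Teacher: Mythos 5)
Your overall strategy mirrors the paper's: use the leading/constant coefficient analysis of the dynatomic polynomials (Lemma~\ref{lem:lead_and_const2}) to kill periods $n\geq 5$, invoke \cite{manespreper} for periods $3$ and $4$, then enumerate tails. Two of your substitutions are genuine improvements in explicitness. Replacing the citation of \cite[Proposition~2]{manespreper} with the direct computation $\psi_b^2(z)=z \Leftrightarrow b(2z^2+b)=0$ (which collapses to the fixed-point equation $z^2=-b/2$, so no exact period-$2$ points exist) is cleaner. Unpacking the period-$4$ criterion to $-1 = 2m/(m^2-1)$, i.e.\ $m^2+2m-1=0$ with irrational roots $-1\pm\sqrt{2}$, is a worthwhile check the paper leaves implicit. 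You also correctly get the type-$1_2$ condition as $-b\in(\QQ^*)^2$, i.e.\ $b=-c^2$, by solving $\psi_b(z)=0$ directly; be aware this disagrees in sign with the paper's $b=c^2$ (the paper appears to have dropped the sign when translating to the $\phi(z)=kz+b'/z$ normalization of \cite{manespreper}, where $b'=-b$), and your version is the one borne out by computation.

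There is, however, a genuine gap in your tail enumeration, and it is one the paper's proof shares: you only chase preimages of $\infty$ (getting $0$, type $1_1$) and of $0$ (getting $\pm\sqrt{-b}$, type $1_2$), but when $b=-2c^2$ the finite fixed points $\pm c$ themselves have rational preimages that are strictly preperiodic. Concretely, for $b=-2$ the equation $\psi_{-2}(z)=1$ is $-z+2/z=1$, i.e.\ $z^2+z-2=(z+2)(z-1)=0$, so $-2\mapsto 1$; symmetrically $2\mapsto -1$. Thus $\{\infty,0,\pm 1,\pm 2\}$ are all rational preperiodic for $\psi_{-2}$, giving $6$ points, not $4$. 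The mutual-exclusivity observation you make ($-b/2$ and $-b$ cannot both be squares) correctly prevents the $\pm\sqrt{-b/2}$-branch and the $\pm\sqrt{-b}$-branch from occurring together, but it does not control the type-$1_1$ preimages attached to rational finite fixed points. To make the proof correct you must, in the case $b=-2c^2$, solve $\psi_b(z)=\pm c$ and account for the extra rational solutions; the alternative in the statement should then read $2$, $4$, or $6$, matching the first family. (The cited \cite[Propositions~7,8]{manespreper} only suppress $1_n$ tails for $n\geq 3$; they do not rule out additional rational type-$1_1$ points.)
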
 
\begin{proof}
A proof identical to Theorem~\ref{thm:main_theorem} using Lemma~\ref{lem:lead_and_const2} shows there can be no rational points of period $n \geq 5$, and we apply~\cite[Theorems~$3$ and~$4$]{manespreper} to see that there are no rational points of primitive period~$3$ or~$4$.  From ~\cite[Proposition~$2$]{manespreper}, we conclude that no value of $b\in \QQ^*$ gives a rational map with a rational point of period~$2$.  Hence, we need only consider fixed points and points of type~$1_n$ for $n\geq 1$.

For every $b \in \QQ^*$, there is a rational fixed point at $\infty$ and a rational point of type $1_1$ at~$0$. By ~\cite[Proposition 1]{manespreper}, $\psi_b$ has finite fixed points if and only if $b=-2c^2$.  Since all such maps are conjugate over $\QQ$, so we take $b=-2$ as a representative.  There are no other type $1_1$ rational points. 
Applying ~\cite[Proposition 6]{manespreper} we have rational points of type $1_2$ if and only if $b=c^2$.  Again,  all such maps are conjugate over $\QQ$, so we take $b=1$ as a representative.
 By ~\cite[Proposition 7 and 8]{manespreper} there are no rational points of type $1_n$ for $n>2.$ 
 See Figure~\ref{fig:final_fig2}.
\end{proof}

\begin{figure}[h]
\centering
\mbox{
\subfigure[$b=2$]{
	\xygraph{ 
		!{<0cm,0cm>;<1cm,0cm>:<0cm,1cm>::} 
		!{(1,2) }*+{\bullet_{\infty}}="a" 
		!{(1,1) }*+{\bullet_{0}}="b" 
		"a" :@(l,ru) "a"
		"b":"a" 
	} 
}
\qquad\quad
\subfigure[$b=1$]{
	\xygraph{ 
		!{<0cm,0cm>;<1cm,0cm>:<0cm,1cm>::} 
		!{(1,2) }*+{\bullet_{\infty}}="a" 
		!{(1,1) }*+{\bullet_{0}}="b" 
		!{(0,0) }*+{\bullet_{-1}}="c" 
		!{(2,0)}*+{\bullet_{1}}="d" 
		"a" :@(l,ru)  "a"
		"b":"a" 
		"d":"b" 
		"c":"b" 
	} 
}
}
\qquad\quad
\subfigure[$b=-2$]{
	\xygraph{ 
		!{<0cm,0cm>;<2cm,0cm>:<0cm,1cm>::} 
		!{(0,1) }*+{\bullet_{\infty}}="a" 
		!{(0,0) }*+{\bullet_{0}}="b" 
		!{(.5,.5) }*+{\bullet_{-1}}="c" 
		!{(1,.5) }*+{\bullet_{1}}="d" 
		"a" :@(l,ru)  "a" 
		"b":"a" 
			"c" :@(l,ru)  "c" 
				"d" :@(l,ru)  "d" 
	}}
\caption{All possible rational preperiodic graphs for $\psi_b(z) =-\left(z + \frac b z\right)$.}
\label{fig:final_fig2}
\end{figure}

\end{document}